\numberwithin{equation}{section}
\theoremstyle{plain}
\newtheorem{theorem}{Theorem}[section]
\newtheorem{corollary}[theorem]{Corollary}
\newtheorem{proposition}[theorem]{Proposition}
\newtheorem{lemma}[theorem]{Lemma}
\theoremstyle{remark}
\newtheorem*{ack}{Acknowledgement}
\theoremstyle{definition}
\newtheorem{definition}[theorem]{Definition}
\newcommand{\R}{\mathbb{R}}
\newcommand{\N}{\mathbb{N}}
\newcommand{\cL}{\mathcal{L}}
\newcommand{\cG}{\mathcal{G}}
\newcommand{\cT}{\mathcal{T}}
\newcommand{\cI}{\mathcal{I}}
\renewcommand{\epsilon}{\varepsilon}
\renewcommand{\rho}{\varrho}
\renewcommand{\phi}{\varphi}
\begin{document}

\title{Expansions in multiple bases}

\author{Yao-Qiang Li}

\address{Institut de Math\'ematiques de Jussieu - Paris Rive Gauche \\
         Sorbonne Universit\'e - Campus Pierre et Marie Curie\\
         Paris, 75005 \\
         France}

\email{yaoqiang.li@imj-prg.fr\quad yaoqiang.li@etu.upmc.fr}

\address{School of Mathematics \\
         South China University of Technology \\
         Guangzhou, 510641 \\
         P.R. China}

\email{scutyaoqiangli@gmail.com\quad scutyaoqiangli@qq.com}

\subjclass[2010]{Primary 11A63; Secondary 37B10.}
\keywords{expansions, multiple bases, greedy, lazy, quasi-greedy, quasi-lazy}

\begin{abstract}
Expansion of real numbers is a basic research topic in number theory. Usually we expand real numbers in one given base. In this paper, we begin to systematically study expansions in multiple given bases in a reasonable way, which is a generalization in the sense that if all the bases are taken to be the same, we return to the classical expansions in one base. In particular, we focus on greedy, quasi-greedy, lazy, quasi-lazy and unique expansions in multiple bases.
\end{abstract}

\maketitle

\section{Introduction}

As is well known, expansion in a given base is the most common way to represent a real number. For example, expansions in base $10$ are used in our daily lives and expansions in base $2$ are used in computer systems. Expansions of real numbers in integer bases have been widely used. As a natural generalization, in 1957, R\'enyi \cite{R57} introduced expansions in non-integer bases, which attracted a lot of attention in the following decades. Until Neunh\"auserer \cite{N19} began the study of expansions in two bases recently in 2019, all expansions studied were in one base. In this paper, we begin the study of expansions in multiple bases.

Let $\N$ be the set of positive integers $\{1,2,3,\cdots\}$ and $\R$ be the set of real numbers. We recall the concept of expansions in one base first. Let $m\in\N$, $\beta\in(1,m+1]$ and $x\in\R$. A sequence $w=(w_i)_{i\ge1}\in\{0,1,\cdots,m\}^\N$ is called a \textit{$\beta$-expansion} of $x$ if
$$x=\pi_\beta(w):=\sum_{i=1}^\infty\frac{w_i}{\beta^i}.$$
It is known that $x$ has a $\beta$-expansion if and only if $x\in[0,\frac{m}{\beta-1}]$ (see for examples \cite{B14,B20,BK19,R57}).

The following question is natural to be thought of: Given $m\in\N$, $\beta_0,\beta_1,\cdots,\beta_m>1$, $x\in\R$ and $(w_i)_{i\ge1}\in\{0,1,\cdots,m\}^\N$, in which case should we say that $(w_i)_{i\ge1}$ is a $(\beta_0,\beta_1,\cdots,\beta_m)$-expansion of $x$, such that when $\beta_0,\beta_1,\cdots,\beta_m$ are taken to be the same $\beta$, we have $x=\sum_{i=1}^\infty\frac{w_i}{\beta^i}$? Proposition \ref{m-b expansion} may answer this question.

Let us give some notations first. For all $m\in\N$ and $\beta_0,\beta_1,\cdots,\beta_m>1$, we define
$$a_k:=\frac{k}{\beta_k}\quad\text{and}\quad b_k:=\frac{k}{\beta_k}+\frac{m}{\beta_k(\beta_m-1)}\quad\text{for all }k\in\{0,\cdots,m\}.$$
Note that $a_0=0$ and $b_m=\frac{m}{\beta_m-1}$. For all $m\in\N$, let
$$D_m:=\Big\{(\beta_0,\cdots,\beta_m):\beta_0,\cdots,\beta_m>1\text{ and }a_k<a_{k+1}\le b_k<b_{k+1}\text{ for all }k,\text{ }0\le k\le m-1\Big\}.$$
It is worth to note that $D_m$ is large enough to ensure that $(\underbrace{\beta,\cdots,\beta}_{m+1})\in D_m$ for all $\beta\in(1,m+1]$ and $m\in\N$, and $(\beta_0,\beta_1)\in D_1$ for all $\beta_0,\beta_1\in(1,2]$.

\begin{proposition}\label{m-b expansion}
Let $m\in\N$, $(\beta_0,\cdots,\beta_m)\in D_m$ and $x\in\R$. Then $x\in[0,\frac{m}{\beta_m-1}]$ if and only if there exists a sequence $w\in\{0,\cdots,m\}^\N$ such that
$$x=\sum_{i=1}^\infty\frac{w_i}{\beta_{w_1}\beta_{w_2}\cdots\beta_{w_i}}.$$
\end{proposition}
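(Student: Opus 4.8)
The plan is to read Proposition~\ref{m-b expansion} as a statement about a self-similar coding map. For $w=(w_i)_{i\ge1}\in\{0,\dots,m\}^\N$ write $\Pi_0(w):=1$, $\Pi_n(w):=\beta_{w_1}\cdots\beta_{w_n}$, and $S_n(w):=\sum_{i=1}^n w_i/\Pi_i(w)$, so that the sequence in the statement represents $x$ precisely when $S_n(w)\to x$. The identity that organizes everything is the elementary
\[
\frac{k+b_m}{\beta_k}=b_k\qquad(0\le k\le m),
\]
i.e.\ the increasing affine bijection $\phi_k\colon t\mapsto (k+t)/\beta_k$ maps $[0,b_m]$ onto $[a_k,b_k]$ (note $a_k=\phi_k(0)$ and $b_m=\frac{m}{\beta_m-1}$). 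We shall also use that $(\beta_0,\dots,\beta_m)\in D_m$ forces the chain $b_0<b_1<\dots<b_m$ together with the overlaps $a_{k+1}\le b_k$ for $0\le k\le m-1$.

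For the direction ``$x=\sum_{i\ge1}w_i/\Pi_i(w)$ for some $w$'' $\Longrightarrow$ ``$x\in[0,\tfrac{m}{\beta_m-1}]$'', I would show that for every $w$ the partial sums stay in $[0,b_m)$. Nonnegativity is clear; for the upper bound I would prove by induction on $n$ that $S_n(w)+b_m/\Pi_n(w)\le b_m$. The base case $n=0$ is $b_m\le b_m$. For the inductive step, peeling off the last term and factoring $\Pi_{n+1}(w)=\Pi_n(w)\beta_{w_{n+1}}$ gives
\[
S_{n+1}(w)+\frac{b_m}{\Pi_{n+1}(w)}=S_n(w)+\frac{1}{\Pi_n(w)}\cdot\frac{w_{n+1}+b_m}{\beta_{w_{n+1}}}=S_n(w)+\frac{b_{w_{n+1}}}{\Pi_n(w)},
\]
and this is $\le S_n(w)+b_m/\Pi_n(w)\le b_m$ because $b_{w_{n+1}}\le b_m$ and by the inductive hypothesis. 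Hence $0\le S_n(w)\le b_m-b_m/\Pi_n(w)<b_m$ for all $n$, so the limit $x$ lies in $[0,b_m]=[0,\tfrac{m}{\beta_m-1}]$.

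For the converse, take $x\in[0,\tfrac{m}{\beta_m-1}]=[0,b_m]$. I would first note $\bigcup_{k=0}^m[a_k,b_k]=[0,b_m]$: from $a_0=0$, $a_{k+1}\le b_k$ and $b_m=\frac{m}{\beta_m-1}$ an immediate induction yields $[a_0,b_0]\cup\dots\cup[a_k,b_k]=[0,b_k]$ for every $k$. Then I would build $w$ recursively: put $x_0:=x$, and given $x_{n-1}\in[0,b_m]$ choose $w_n\in\{0,\dots,m\}$ with $x_{n-1}\in[a_{w_n},b_{w_n}]$ and set $x_n:=\beta_{w_n}x_{n-1}-w_n=\phi_{w_n}^{-1}(x_{n-1})\in[0,b_m]$, which keeps the recursion going. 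Unrolling $x_{n-1}=w_n/\beta_{w_n}+x_n/\beta_{w_n}$ gives $x=S_n(w)+x_n/\Pi_n(w)$ for all $n$. Finally, since every $\beta_k>1$ and there are only finitely many indices, $\Pi_n(w)\ge(\min_{0\le k\le m}\beta_k)^n\to\infty$ while $0\le x_n\le b_m$; hence $x_n/\Pi_n(w)\to0$ and $x=\lim_{n\to\infty}S_n(w)=\sum_{i=1}^\infty w_i/\Pi_i(w)$.

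There is no deep obstacle; the two points that genuinely use the hypotheses, and that I would write out with care, are (i) the gap-free covering $\bigcup_{k=0}^m[a_k,b_k]=[0,b_m]$, which is precisely what the interlacing inequalities defining $D_m$ encode, and (ii) the convergence $x_n/\Pi_n(w)\to0$, which relies on all the $\beta_k$ being bounded away from $1$. Everything else is bookkeeping around the identity $b_k=(k+b_m)/\beta_k$.
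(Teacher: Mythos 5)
Your proof is correct. The construction half (from $x\in[0,\frac{m}{\beta_m-1}]$ to $w$) is essentially the paper's: the paper encodes your covering $\bigcup_{k=0}^m[a_k,b_k]=[0,b_m]$ into the partitions $(I_0,\cdots,I_m)\in\cI_{\beta_0,\cdots,\beta_m}$, proves $x=\pi(t_1\cdots t_n)+T^nx/(\beta_{t_1}\cdots\beta_{t_n})$ by the same unrolling (Lemma \ref{iteration}), and lets $n\to\infty$ (Proposition \ref{expansions}); your $x_n$ is its $T^nx$ for one admissible choice of partition. The boundedness half is where you genuinely depart. The paper argues by contradiction: if $x>\frac{m}{\beta_m-1}$, then $b_k<b_m$ places every fixed point $\frac{k}{\beta_k-1}$ below $x$, so each $T_k$ strictly increases $x$; a further estimate shows the increments of the orbit $T_{w_n}\circ\cdots\circ T_{w_1}x$ are bounded below by a fixed positive constant, so the orbit diverges, contradicting the fact that it equals the uniformly bounded tails of the series. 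Your direct induction $S_n(w)+b_m/\Pi_n(w)\le b_m$, powered by the same identity in the form $(k+b_m)/\beta_k=b_k\le b_m$, reaches the conclusion without contradiction or any increment estimate, and yields the explicit bound $S_n(w)\le b_m-b_m/\Pi_n(w)$ as a bonus. Both arguments draw on exactly the same input from $D_m$ ($b_k\le b_m$ for the upper bound, $a_{k+1}\le b_k$ for the covering), so nothing is lost in generality; yours is the shorter route for this proposition, while the paper's orbit computation gets reused essentially verbatim in Proposition \ref{dynamical interpretation}, which partly explains why it is set up that way.
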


Thus we give the following.

\begin{definition}[Expansions in multiple bases]\label{def m-b expansions}
Let $m\in\N$, $\beta_0,\cdots,\beta_m>1$ and $x\in\R$. We say that the sequence $w\in\{0,\cdots,m\}^\N$ is a \textit{$(\beta_0,\cdots,\beta_m)$-expansion} of $x$ if
$$x=\sum_{i=1}^\infty\frac{w_i}{\beta_{w_1}\beta_{w_2}\cdots\beta_{w_i}}.$$
\end{definition}

On the one hand, it is straightforward to see that when $\beta_0,\cdots,\beta_m$ are taken to be the same $\beta$, $(\beta_0,\cdots,\beta_m)$-expansions are just $\beta$-expansions. On the other hand, we will see in Section 2 that many properties of $\beta$-expansions can be generalized to $(\beta_0,\cdots,\beta_m)$-expansions. This further confirms that our definition of expansions in multiple bases is reasonable.

Let $\sigma$ be the \textit{shift map} defined by $\sigma(w_1w_2\cdots):=w_2w_3\cdots$ for any sequence $(w_i)_{i\ge1}$. Given $\beta_0,\cdots,\beta_m>1$, for every integer $k\in\{0,\cdots,m\}$, we define the linear map $T_k$ by
$$T_k(x):=\beta_kx-k\quad\text{for }x\in\R.$$

\begin{small}
\begin{figure}[H]\label{T}
\begin{tikzpicture}
\draw[-](0,0)--(7,0);
\draw[-](0,0)--(0,7);
\draw[-](0,7)--(7,7);
\draw[-](7,0)--(7,7);
\draw[-](0,0)--(1.05,7);
\draw[-](0.875,0)--(3.85,7);
\draw[-](1.575,0)--(4.2,7);
\draw[-](3.318,0)--(7,7);
\draw[dashed](0,0)--(7,7);
\draw[-](-0.105,-0.7)--(0,0);
\draw[-](0.595,-0.7)--(0.875,0);
\draw[-](1.33,-0.7)--(1.575,0);
\draw[-](2.94,-0.7)--(3.318,0);
\draw[-](1.05,7)--(1.155,7.7);
\draw[-](3.85,7)--(4.151,7.7);
\draw[-](4.2,7)--(4.459,7.7);
\draw[-](7,7)--(7.399,7.7);
\foreach \x in {0}{\draw(\x,0)--(\x,0)node[below,outer sep=1pt,font=\footnotesize]at(\x,0){$a_0$};}
\foreach \x in {0.875}{\draw(\x,0)--(\x,0)node[below,outer sep=1pt,font=\footnotesize]at(\x,0){$a_1$};}
\foreach \x in {1.575}{\draw(\x,0)--(\x,0)node[below,outer sep=1pt,font=\footnotesize]at(\x,0){$a_2$};}
\foreach \x in {3.318}{\draw(\x,0)--(\x,0)node[below,outer sep=1pt,font=\footnotesize]at(\x,0){$a_3$};}
\foreach \x in {7}{\draw(\x,0)--(\x,0)node[below,outer sep=1pt,font=\footnotesize]at(\x,0){$\frac{3}{\beta_3-1}$};}
\foreach \x in {1.05}{\draw(\x,0)--(\x,0)node[above,outer sep=1pt,font=\footnotesize]at(\x,7){$b_0$};}
\foreach \x in {3.85}{\draw(\x,0)--(\x,0)node[above,outer sep=1pt,font=\footnotesize]at(3.8,7){$b_1$};}
\foreach \x in {4.2}{\draw(\x,0)--(\x,0)node[above,outer sep=1pt,font=\footnotesize]at(4.35,7){$b_2$};}
\foreach \x in {7}{\draw(\x,0)--(\x,0)node[above,outer sep=1pt,font=\footnotesize]at(\x,7){$b_3$};}
\foreach \x in {1.155}{\draw(\x,0)--(\x,0)node[above,outer sep=2pt]at(\x,7.7){$T_0$};}
\foreach \x in {4.151}{\draw(\x,0)--(\x,0)node[above,outer sep=2pt]at(4.151,7.7){$T_1$};}
\foreach \x in {4.459}{\draw(\x,0)--(\x,0)node[above,outer sep=2pt]at(4.709,7.7){$T_2$};}
\foreach \x in {7.399}{\draw(\x,0)--(\x,0)node[above,outer sep=2pt]at(\x,7.7){$T_3$};}
\foreach \y in {7}{\draw(0,\y)--(0,\y)node[left,outer sep=2pt,font=\footnotesize]at(0,\y){$\frac{3}{\beta_3-1}$};}
\end{tikzpicture}
\caption{The graph of $T_0,T_1,T_2$ and $T_3$ for some $(\beta_0,\beta_1,\beta_2,\beta_3)\in D_3$.}
\end{figure}
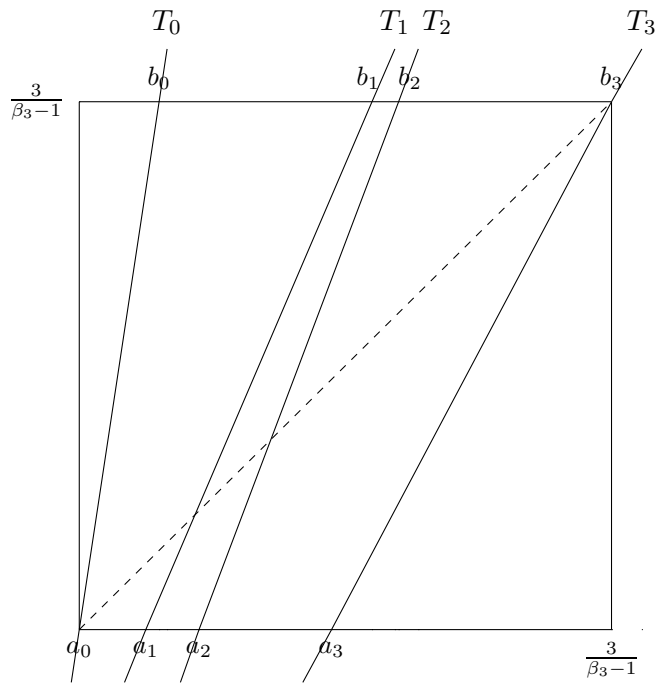
\end{small}

The main results in this paper are the following theorem and corollaries, in which $g^*$ and $l^*$ denote the quasi-greedy and quasi-lazy expansion maps respectively (see Definition \ref{tran and expan} (2) and (4)), and $\prec,\preceq,\succ,\succeq$ denote the lexicographic order. These results focus on determining greedy, lazy and unique expansions in multiple bases (see Definition \ref{tran and expan} (1) and (3)), and generalize some classical results on expansions in one base in some former well known papers.

\begin{theorem}\label{main}
Let $m\in\N$, $(\beta_0,\cdots,\beta_m)\in D_m$, $x\in[0,\frac{m}{\beta_m-1}]$, $w\in\{0,\cdots,m\}^\N$ be a $(\beta_0,\cdots,\beta_m)$-expansion of $x$ and
$$\xi_+:=\max_{0\le k\le m-1}T_k(a_{k+1}),\quad\xi_-:=\min_{0\le k\le m-1}T_k(a_{k+1}),$$
$$\eta_+:=\max_{1\le k\le m}T_k(b_{k-1}),\quad\eta_-:=\min_{1\le k\le m}T_k(b_{k-1}).$$
\begin{itemize}
\item[(1) \textcircled{\footnotesize{1}}] If $w$ is a greedy expansion, then $\sigma^nw\prec g^*(\xi_+)$ whenever $w_n<m$.
\item[\textcircled{\footnotesize{2}}] If $\sigma^nw\prec g^*(\xi_-)$ whenever $w_n<m$, then $w$ is a greedy expansion.
\item[(2) \textcircled{\footnotesize{1}}] If $w$ is a lazy expansion, then $\sigma^nw\succ l^*(\eta_-)$ whenever $w_n>0$.
\item[\textcircled{\footnotesize{2}}] If $\sigma^nw\succ l^*(\eta_+)$ whenever $w_n>0$, then $w$ is a lazy expansion.
\item[(3) \textcircled{\footnotesize{1}}] If $w$ is a unique expansion, then
$$\sigma^nw\prec g^*(\xi_+)\text{ whenever }w_n<m\quad\text{and}\quad\sigma^nw\succ l^*(\eta_-)\text{ whenever }w_n>0.$$
\item[\textcircled{\footnotesize{2}}] If
$$\sigma^nw\prec g^*(\xi_-)\text{ whenever }w_n<m\quad\text{and}\quad\sigma^nw\succ l^*(\eta_+)\text{ whenever }w_n>0,$$
then $w$ is a unique expansion.
\end{itemize}
\end{theorem}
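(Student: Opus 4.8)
The plan is to transfer everything into a symbolic dynamics picture, exactly as in the classical one-base theory, and then read off the six implications from lexicographic comparisons. The key object is the map that sends a digit sequence to the point it expands, together with the transformations $T_k(x)=\beta_k x - k$. First I would record the fundamental identities: if $w$ is a $(\beta_0,\dots,\beta_m)$-expansion of $x$, then $w_1$ must be a digit $k$ with $x$ lying in the interval $[a_k,b_k]$ (this is the content of the construction behind Proposition~\ref{m-b expansion}, and the intervals $[a_k,b_k]$ overlap in the pattern dictated by $D_m$), and moreover $\sigma w$ is a $(\beta_0,\dots,\beta_m)$-expansion of $T_k(x)$. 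Thus an expansion of $x$ corresponds to an orbit $x, T_{w_1}(x), T_{w_2}T_{w_1}(x),\dots$ staying inside $[0,\tfrac{m}{\beta_m-1}]$ forever, with $w_{n+1}$ always an admissible digit for the current point. Greedy means: at each step pick the largest admissible digit; lazy means pick the smallest; unique means there is no choice at any step.

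Second I would characterize "admissibility of a larger/smaller digit" in terms of where the current point sits relative to the branch points. Writing $y_n:=T_{w_n}\cdots T_{w_1}(x)$ for the $n$-th iterate (with $y_0=x$), the digit $w_n$ fails to be greedy exactly when $w_n<m$ yet $y_{n-1}$ also admits the digit $w_n+1$, i.e. $y_{n-1}\ge a_{w_n+1}$, equivalently $y_n=\beta_{w_n}y_{n-1}-w_n\ge \beta_{w_n}a_{w_n+1}-w_n=T_{w_n}(a_{w_n+1})$. So $w$ is greedy iff for every $n$ with $w_n<m$ we have $y_n<T_{w_n}(a_{w_n+1})$. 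Now $T_{w_n}(a_{w_n+1})$ ranges over the finite set $\{T_k(a_{k+1}):0\le k\le m-1\}$, whose max and min are $\xi_+$ and $\xi_-$; and $y_n=\pi(\sigma^n w)$ where $\pi$ denotes the expansion sum. The point is that $\pi$ is (weakly) increasing with respect to the lexicographic order on $\{0,\dots,m\}^\N$ — this monotonicity is surely one of the "many properties" established in Section~2 — and the quasi-greedy expansion $g^*(t)$ of a value $t$ is precisely the lexicographically largest sequence whose $\pi$-value is $\le t$ that does not end in $0^\infty$, so that $\pi(\sigma^n w)<T_{w_n}(a_{w_n+1})$ is implied by $\sigma^n w\prec g^*(\xi_-)$ and implies $\sigma^n w\preceq g^*(T_{w_n}(a_{w_n+1}))\preceq g^*(\xi_+)$, with the strict/non-strict bookkeeping handled by the standard quasi-greedy argument. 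This gives (1)\,\textcircled{\footnotesize 1} and (1)\,\textcircled{\footnotesize 2}. The lazy statements (2)\,\textcircled{\footnotesize 1}, (2)\,\textcircled{\footnotesize 2} follow by the mirror-image argument: $w_n>0$ is not the smallest admissible digit iff $y_{n-1}\le b_{w_n-1}$, i.e. $y_n\le T_{w_n}(b_{w_n-1})$, these values have max $\eta_+$ and min $\eta_-$, and one uses that $l^*(t)$ is the lexicographically smallest sequence with $\pi$-value $\ge t$ not ending in $m^\infty$.

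Third, for (3) I would simply observe that $w$ is a unique expansion iff it is simultaneously greedy and lazy: if $w$ is both, then at every position the chosen digit is both the largest and the smallest admissible one, so it is the only admissible one and no alternative orbit exists; conversely a unique expansion cannot fail either condition since failing greediness (resp. laziness) produces a genuinely different expansion by switching that digit up (resp. down) and then continuing greedily (resp. lazily). Hence (3)\,\textcircled{\footnotesize 1} is the conjunction of (1)\,\textcircled{\footnotesize 1} and (2)\,\textcircled{\footnotesize 1}, and (3)\,\textcircled{\footnotesize 2} is the conjunction of (1)\,\textcircled{\footnotesize 2} and (2)\,\textcircled{\footnotesize 2}.

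The main obstacle, and the reason the theorem gives a two-sided sandwich ($\xi_-\le\xi_+$, $\eta_-\le\eta_+$) rather than an exact criterion, is precisely that in the multiple-base setting the threshold $T_{w_n}(a_{w_n+1})$ depends on the current digit $w_n$, whereas $g^*$ is defined with respect to a single value. So the delicate part is the careful verification that $g^*$ and $l^*$ are monotone in their argument and that $\pi$ is monotone in the lexicographic order, together with the precise handling of when the inequality $\pi(\sigma^n w)<T_{w_n}(a_{w_n+1})$ can be replaced by a clean lexicographic inequality against $g^*$ of the uniform bounds $\xi_\pm$ — this requires knowing that $g^*$ never takes the forbidden tail $0^\infty$ and the corresponding fact for $l^*$ and $m^\infty$, which is exactly the quasi-greedy/quasi-lazy machinery that Section~2 must set up. Once that dictionary is in place, each of the six implications is a short lexicographic argument.
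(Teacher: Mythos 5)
Your reduction of greediness to the pointwise condition $\pi(w_nw_{n+1}\cdots)<a_{w_n+1}$ (equivalently $\pi(\sigma^nw)<T_{w_n}(a_{w_n+1})$) whenever $w_n<m$ matches the paper's Proposition~\ref{iff}, and your treatment of (3) via ``unique $\Leftrightarrow$ greedy and lazy'' is essentially Proposition~\ref{max-min}. But the pivot of your argument for the sufficiency directions (1)\,\textcircled{\footnotesize 2}, (2)\,\textcircled{\footnotesize 2} --- the claim that $\pi$ is increasing with respect to the lexicographic order on all of $\{0,\dots,m\}^\N$ --- is false, and this is a genuine gap. Already in one base with $\beta$ close to $1$ one has $10^\infty\succ 01^\infty$ while $\pi(10^\infty)=1/\beta<\frac{1}{\beta(\beta-1)}=\pi(01^\infty)$. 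What Section~2 actually provides (Proposition~\ref{commutative}(4)) is monotonicity of $\pi$ \emph{restricted to the set of admissible sequences} $\cT$; since $\sigma^nw$ is not known to be admissible (that is what one is trying to prove), you cannot conclude $\pi(\sigma^nw)<\xi_-$ from $\sigma^nw\prec g^*(\xi_-)$ by monotonicity. This is precisely why the theorem is hard: the paper's proof of (1)\,\textcircled{\footnotesize 2} invokes the hypothesis not once but at an infinite sequence of positions $n=n_0<n_1<n_2<\cdots$, where $n_{j+1}$ is the first index after $n_j$ at which $\sigma^{n_j}w$ drops below $g^*(\xi_-)$ (note $w_{n_{j+1}}<g^*_{s_{j+1}}\le m$, so the hypothesis applies again there), decomposes $\pi(\sigma^nw)$ into the corresponding blocks, and bounds each block by $\pi(g^*_1\cdots g^*_{s_{j+1}})<\xi_-\le T_{w_{n_j}}(a_{w_{n_j}+1})$ against a telescoping decomposition of $T_{w_n}(a_{w_n+1})$. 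Your one-shot application of the hypothesis at the single index $n$ cannot reproduce this.

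A smaller but related issue affects the necessity direction (1)\,\textcircled{\footnotesize 1}: your chain only yields $\sigma^nw\preceq g^*(\xi_+)$, whereas the theorem asserts strict inequality. The paper obtains strictness from the ``interactive increase'' Proposition~\ref{supincrease}, which compares expansions generated by transformations with different endpoint conventions and gives $g(y)\prec g^*(z)$ whenever $y<z$; combined with $G^nx<T_{w_n}(a_{w_n+1})\le\xi_+$ and $\sigma^n w=g(G^nx)$ this closes the argument. You gesture at ``standard quasi-greedy bookkeeping,'' but since that bookkeeping is exactly where the content lies (and your main monotonicity lemma is unavailable), the proposal as written does not constitute a proof.
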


For the case that there are at most two different bases, we get the following criteria directly from Theorem \ref{main}.

\begin{corollary}\label{cor1}
Let $\beta_0,\beta_1\in(1,2]$, $x\in[0,\frac{1}{\beta_1-1}]$ and $w\in\{0,1\}^\N$ be a $(\beta_0,\beta_1)$-expansion of $x$. Then
\newline(1) $w$ is a greedy expansion if and only if $\sigma^nw\prec g^*(\frac{\beta_0}{\beta_1})$ whenever $w_n=0$;
\newline(2) $w$ is a lazy expansion if and only if $\sigma^nw\succ l^*(\frac{\beta_1}{\beta_0(\beta_1-1)}-1)$ whenever $w_n=1$;
\newline(3) $w$ is a unique expansion if and only if
$$\sigma^nw\prec g^*(\frac{\beta_0}{\beta_1})\text{ whenever }w_n=0\quad\text{and}\quad\sigma^nw\succ l^*(\frac{\beta_1}{\beta_0(\beta_1-1)}-1)\text{ whenever }w_n=1.$$
\end{corollary}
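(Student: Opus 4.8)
The plan is simply to specialize Theorem \ref{main} to the case $m=1$. First I would check that the hypotheses of Theorem \ref{main} hold: by the remark following the definition of $D_m$ we have $(\beta_0,\beta_1)\in D_1$ for all $\beta_0,\beta_1\in(1,2]$; the point $x$ lies in $[0,\frac{1}{\beta_1-1}]=[0,\frac{m}{\beta_m-1}]$; and $w\in\{0,1\}^\N$ is a $(\beta_0,\beta_1)$-expansion of $x$ by assumption. So Theorem \ref{main} is applicable with $m=1$.

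Next I would evaluate the four constants $\xi_\pm,\eta_\pm$ in this case. With $m=1$ one has $a_0=0$, $a_1=\frac{1}{\beta_1}$, $b_0=\frac{1}{\beta_0(\beta_1-1)}$, $b_1=\frac{1}{\beta_1-1}$, together with $T_0(x)=\beta_0x$ and $T_1(x)=\beta_1x-1$. The crucial observation is that the index sets over which $\xi_\pm$ and $\eta_\pm$ are extremized, namely $\{0,\dots,m-1\}=\{0\}$ and $\{1,\dots,m\}=\{1\}$, are singletons; hence the maxima and minima coincide and
$$\xi_+=\xi_-=T_0(a_1)=\frac{\beta_0}{\beta_1},\qquad \eta_+=\eta_-=T_1(b_0)=\frac{\beta_1}{\beta_0(\beta_1-1)}-1,$$
the last equality in each chain being a one-line simplification.

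Finally, because the upper and lower thresholds agree, the two implications \textcircled{\footnotesize{1}} and \textcircled{\footnotesize{2}} in each of parts (1), (2), (3) of Theorem \ref{main} fuse into a single equivalence. Since the only digits are $0$ and $1$, the conditions ``$w_n<m$'' and ``$w_n>0$'' become ``$w_n=0$'' and ``$w_n=1$'' respectively, and substituting the computed values of $\xi_\pm$ and $\eta_\pm$ turns parts (1), (2), (3) of Theorem \ref{main} into statements (1), (2), (3) of the corollary.

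I do not expect a genuine obstacle: the mathematical content is entirely contained in Theorem \ref{main}, and the only points requiring a little care are the bookkeeping observations that the extremization ranges are singletons (so the one-sided criteria of Theorem \ref{main} become two-sided) and that the substituted arguments $\frac{\beta_0}{\beta_1}$ and $\frac{\beta_1}{\beta_0(\beta_1-1)}-1$ lie in the domains on which $g^*$ and $l^*$ are defined — the latter being automatic, as these are precisely the arguments $\xi_\pm$, $\eta_\pm$ already appearing in Theorem \ref{main}.
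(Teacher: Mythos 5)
Your proposal is correct and is exactly the paper's argument: the author likewise obtains Corollary \ref{cor1} by specializing Theorem \ref{main} to $m=1$, where the extremization ranges are singletons so that $\xi_+=\xi_-=T_0(a_1)=\frac{\beta_0}{\beta_1}$ and $\eta_+=\eta_-=T_1(b_0)=\frac{\beta_1}{\beta_0(\beta_1-1)}-1$, collapsing each pair of one-sided implications into an equivalence. Your computations of $a_1$, $b_0$ and the two thresholds check out, so nothing further is needed.
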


The following corollary provide some ways to determine whether an expansion is greedy, lazy or unique by the quasi-greedy expansion of $1$ and the quasi-lazy expansion of $\frac{m}{\beta_m-1}-1$.

\begin{corollary}\label{cor2}
Let $m\in\N$, $(\beta_0,\cdots,\beta_m)\in D_m$, $x\in[0,\frac{m}{\beta_m-1}]$ and $w\in\{0,\cdots,m\}^\N$ be a $(\beta_0,\cdots,\beta_m)$-expansion of $x$.
\begin{itemize}
\item[(1) \textcircled{\footnotesize{1}}] Suppose $\beta_0\le\beta_1\le\cdots\le\beta_m$. If $w$ is a greedy expansion, then $\sigma^nw\prec g^*(1)$ whenever $w_n<m$.
\item[\textcircled{\footnotesize{2}}] Suppose $\beta_0\ge\beta_1\ge\cdots\ge\beta_m$. If $\sigma^nw\prec g^*(1)$ whenever $w_n<m$, then $w$ is a greedy expansion.
\end{itemize}
\begin{itemize}
\item[(2) \textcircled{\footnotesize{1}}] Suppose $\beta_0\le\beta_1\le\cdots\le\beta_m$. If $w$ is a lazy expansion, then $\sigma^nw\succ l^*(\frac{m}{\beta_m-1}-1)$ whenever $w_n>0$.
\item[\textcircled{\footnotesize{2}}] Suppose $\beta_0\ge\beta_1\ge\cdots\ge\beta_m$. If $\sigma^nw\succ l^*(\frac{m}{\beta_m-1}-1)$ whenever $w_n>0$, then $w$ is a lazy expansion.
\end{itemize}
\begin{itemize}
\item[(3) \textcircled{\footnotesize{1}}] Suppose $\beta_0\le\beta_1\le\cdots\le\beta_m$. If $w$ is a unique expansion, then
    $$\sigma^nw\prec g^*(1)\quad\text{whenever }w_n<m\quad\text{and}\quad\sigma^nw\succ l^*(\frac{m}{\beta_m-1}-1)\quad\text{whenever }w_n>0.$$
\item[\textcircled{\footnotesize{2}}] Suppose $\beta_0\ge\beta_1\ge\cdots\ge\beta_m$. If
$$\sigma^nw\prec g^*(1)\quad\text{whenever }w_n<m\quad\text{and}\quad\sigma^nw\succ l^*(\frac{m}{\beta_m-1}-1)\quad\text{whenever }w_n>0,$$
then $w$ is a unique expansion.
\end{itemize}
\end{corollary}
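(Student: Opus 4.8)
The proof of Corollary \ref{cor2} is a reduction to Theorem \ref{main}: under the monotonicity hypotheses on the bases one may replace $\xi_+,\xi_-$ by $1$ and $\eta_+,\eta_-$ by $\frac{m}{\beta_m-1}-1$, using that the quasi-greedy map $g^*$ and the quasi-lazy map $l^*$ are non-decreasing for the lexicographic order, i.e.\ $x\le y$ implies $g^*(x)\preceq g^*(y)$ and $l^*(x)\preceq l^*(y)$ (a property of these maps developed in Section 2). The plan is therefore to establish the relevant numerical inequalities and then simply quote Theorem \ref{main}.

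First I would compute $T_k(a_{k+1})$. Since $a_{k+1}=\frac{k+1}{\beta_{k+1}}$ and $T_k(x)=\beta_kx-k$,
$$T_k(a_{k+1})=\frac{(k+1)\beta_k}{\beta_{k+1}}-k\qquad(0\le k\le m-1),$$
so $T_k(a_{k+1})\le1$ when $\beta_k\le\beta_{k+1}$ and $T_k(a_{k+1})\ge1$ when $\beta_k\ge\beta_{k+1}$; hence $\beta_0\le\cdots\le\beta_m$ forces $\xi_+\le1$, and $\beta_0\ge\cdots\ge\beta_m$ forces $\xi_-\ge1$. Next, writing $M:=\frac{m}{\beta_m-1}$ and noting that $b_{k-1}=\frac1{\beta_{k-1}}(k-1+M)$, a direct computation gives
$$T_k(b_{k-1})-(M-1)=\Big(\frac{\beta_k}{\beta_{k-1}}-1\Big)(k-1+M)\qquad(1\le k\le m),$$
and since $k-1+M>0$ this yields $T_k(b_{k-1})\ge M-1$ when $\beta_{k-1}\le\beta_k$ and $T_k(b_{k-1})\le M-1$ when $\beta_{k-1}\ge\beta_k$; hence $\beta_0\le\cdots\le\beta_m$ forces $\eta_-\ge M-1$, while $\beta_0\ge\cdots\ge\beta_m$ forces $\eta_+\le M-1$. (In each case the numbers $\xi_\pm,\eta_\pm$ as well as $1$ and $M-1$ lie in the domain on which $g^*$ and $l^*$ act, so all expressions below make sense.)

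Finally I would assemble the statements from Theorem \ref{main}. For (1)\textcircled{\footnotesize{1}}: if $\beta_0\le\cdots\le\beta_m$ then $g^*(\xi_+)\preceq g^*(1)$, so for a greedy $w$, Theorem \ref{main}(1)\textcircled{\footnotesize{1}} gives $\sigma^nw\prec g^*(\xi_+)\preceq g^*(1)$ whenever $w_n<m$. For (1)\textcircled{\footnotesize{2}}: if $\beta_0\ge\cdots\ge\beta_m$ then $g^*(1)\preceq g^*(\xi_-)$, so the hypothesis $\sigma^nw\prec g^*(1)$ whenever $w_n<m$ implies $\sigma^nw\prec g^*(\xi_-)$ whenever $w_n<m$, and Theorem \ref{main}(1)\textcircled{\footnotesize{2}} gives that $w$ is greedy. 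The lazy statements (2)\textcircled{\footnotesize{1}} and (2)\textcircled{\footnotesize{2}} follow in exactly the same way from $l^*(M-1)\preceq l^*(\eta_-)$, respectively $l^*(\eta_+)\preceq l^*(M-1)$, combined with Theorem \ref{main}(2); and the unique statements (3)\textcircled{\footnotesize{1}}, (3)\textcircled{\footnotesize{2}} are obtained by running the greedy and lazy arguments in parallel and invoking Theorem \ref{main}(3)\textcircled{\footnotesize{1}}, (3)\textcircled{\footnotesize{2}} respectively. The only point that is not pure bookkeeping — and the one I would pin down first — is the monotonicity of $g^*$ and $l^*$; everything else is the two computations above.
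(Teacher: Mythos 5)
Your proposal is correct and matches the paper's (very terse) proof exactly: the paper likewise deduces Corollary \ref{cor2} from Theorem \ref{main} by noting that $\beta_0\le\cdots\le\beta_m$ forces $\xi_+\le1$ and $\eta_-\ge\frac{m}{\beta_m-1}-1$, that $\beta_0\ge\cdots\ge\beta_m$ forces $\xi_-\ge1$ and $\eta_+\le\frac{m}{\beta_m-1}-1$, and by invoking the monotonicity of $g^*$ and $l^*$ from Proposition \ref{commutative}\,(4). Your explicit computations of $T_k(a_{k+1})$ and $T_k(b_{k-1})$ simply spell out what the paper leaves to the reader.
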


Take $\beta_0,\cdots,\beta_m$ to be the same $\beta$. By Corollary \ref{cor2}, Proposition \ref{symmetric}, Lemma \ref{lemma} and Proposition \ref{commutative}, we get the following corollary, in which $\overline{k}:=m-k$ for all $k\in\{0,\cdots,m\}$ and $\overline{w}:=(\overline{w_i})_{i\ge1}$ for all $w=(w_i)_{i\ge1}\in\{0,\cdots,m\}^\N$.

\begin{corollary}\label{cor3}
Let $m\in\N$, $\beta\in(1,m+1]$, $x\in[0,\frac{m}{\beta-1}]$ and $w\in\{0,\cdots,m\}^\N$ be a $\beta$-expansion of $x$. Then:
\begin{itemize}
\item[(1) \textcircled{\footnotesize{1}}] $w$ is a greedy expansion if and only if $\sigma^nw\prec g^*(1)$ whenever $w_n<m$;
\item[\textcircled{\footnotesize{2}}] $w$ is a lazy expansion if and only if $\sigma^nw\succ\overline{g^*(1)}$ whenever $w_n>0$;
\item[\textcircled{\footnotesize{3}}] $w$ is a unique expansion if and only if
$$\sigma^nw\prec g^*(1)\text{ whenever }w_n<m\quad\text{and}\quad\sigma^nw\succ\overline{g^*(1)}\text{ whenever }w_n>0.$$
\end{itemize}
\begin{itemize}
\item[(2) \textcircled{\footnotesize{1}}] $0\le x<1$ and $w$ is a greedy expansion if and only if $\sigma^nw\prec g^*(1)$ for all $n\ge0$;
\item[\textcircled{\footnotesize{2}}] $\frac{m}{\beta-1}-1<x\le\frac{m}{\beta-1}$ and $w$ is a lazy expansion if and only if $\sigma^nw\succ\overline{g^*(1)}$ for all $n\ge0$;
\item[\textcircled{\footnotesize{3}}] $\frac{m}{\beta-1}-1<x<1$ and $w$ is a unique expansion if and only if
    $$\overline{g^*(1)}\prec\sigma^nw\prec g^*(1)\quad\text{for all }n\ge0.$$
\end{itemize}
\end{corollary}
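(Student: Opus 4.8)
The plan is to deduce the whole statement from Corollary \ref{cor2} by specialising to the case $\beta_0=\cdots=\beta_m=\beta$, and then to convert the lazy side into the ``reflected greedy'' side.

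First, for part (1): when all the bases are the same $\beta$, both ordering hypotheses ``$\beta_0\le\cdots\le\beta_m$'' and ``$\beta_0\ge\cdots\ge\beta_m$'' of Corollary \ref{cor2} hold, so in each of its three items the two implications \textcircled{\footnotesize{1}} and \textcircled{\footnotesize{2}} are simultaneously available and combine into ``if and only if'' statements. This gives directly part (1)\textcircled{\footnotesize{1}}, and it gives that $w$ is lazy iff $\sigma^nw\succ l^*(\tfrac{m}{\beta-1}-1)$ whenever $w_n>0$, and the matching equivalence for unique expansions. It then remains only to recognise $l^*(\tfrac{m}{\beta-1}-1)=\overline{g^*(1)}$. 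This is where I would invoke Proposition \ref{symmetric}: in the one-base case it exhibits the lazy objects as images of the greedy objects under the reflection $k\mapsto\overline k$ on digits together with $x\mapsto\tfrac{m}{\beta-1}-x$ on $[0,\tfrac{m}{\beta-1}]$, hence $l^*(y)=\overline{g^*(\tfrac{m}{\beta-1}-y)}$; taking $y=\tfrac{m}{\beta-1}-1$ gives $l^*(\tfrac{m}{\beta-1}-1)=\overline{g^*(1)}$. Lemma \ref{lemma} and Proposition \ref{commutative} are used here only to know that $w\mapsto\overline w$ reverses the lexicographic order and commutes with $\sigma$, so that the lazy conditions read off verbatim as the $\overline{g^*(1)}$-conditions of the statement. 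This settles part (1).

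For part (2) I would first isolate the one nontrivial fact, call it $(\star)$: for $\beta\in(1,m+1]$ and every $y\in[0,1)$ one has $g(y)\prec g^*(1)$, where $g$ is the greedy expansion map. Granting $(\star)$, part (2)\textcircled{\footnotesize{1}} is quick. For the ``if'' direction, ``$\sigma^nw\prec g^*(1)$ for all $n\ge0$'' contains in particular the instances with $w_n<m$, so $w$ is greedy by part (1)\textcircled{\footnotesize{1}}; writing $w=g(x)$ with $x=\pi_\beta(w)$, the instance $n=0$ gives $g(x)=w\prec g^*(1)\preceq g(1)$, whence $x<1$ by strict monotonicity of $g$ (from Section 2 / Lemma \ref{lemma}), and $x\ge0$ is automatic. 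For the ``only if'' direction, if $w=g(x)$ is greedy with $0\le x<1$ then, because $\beta\le m+1$, every tail $y_n:=T_{w_n}\cdots T_{w_1}(x)$ again lies in $[0,1)$ and $\sigma^nw=g(y_n)$, so $(\star)$ gives $\sigma^nw\prec g^*(1)$ for all $n\ge0$. The lazy statement (2)\textcircled{\footnotesize{2}} then follows by applying this to $\overline w$ (the greedy expansion of $\tfrac{m}{\beta-1}-x<1$) and reflecting via Proposition \ref{symmetric}, and (2)\textcircled{\footnotesize{3}} is just the conjunction of (2)\textcircled{\footnotesize{1}} and (2)\textcircled{\footnotesize{2}}.

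The heart of the matter is $(\star)$. If $g(1)$ does not end with $0^\infty$ then $g(1)=g^*(1)$ and $(\star)$ is immediate from strict monotonicity of $g$. If $g(1)=b_1\cdots b_k0^\infty$ with $b_k\ge1$, then (a standard fact about $\beta$-expansions, available from Section 2) $g^*(1)=(b_1\cdots b_{k-1}(b_k-1))^\infty$, and one must rule out any greedy $w$ with $g^*(1)\prec w\prec g(1)$: a short lexicographic case distinction on the first coordinate where $w$ differs from $(b_1\cdots b_{k-1}(b_k-1))^\infty$ shows that the only escape is that $w$ begins with $b_1\cdots b_{k-1}(b_k-1)$ and $\sigma^kw\succ g^*(1)$; but then $w_k=b_k-1<m$, so the greedy characterisation of part (1)\textcircled{\footnotesize{1}} applied at position $k$ forces $\sigma^kw\prec g^*(1)$ --- a contradiction. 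Hence $g(x)=w\preceq g^*(1)$ for $x<1$, and equality is impossible since it would give $x=\pi_\beta(g^*(1))=1$; this proves $(\star)$. I expect this last lexicographic lemma --- excluding greedy sequences strictly between $g^*(1)$ and $g(1)$ when $g(1)$ is finite --- to be the only genuinely delicate point; the rest is bookkeeping with the reflection $\overline{\cdot}$ and monotonicity of $g$ and $l$.
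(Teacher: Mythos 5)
Your proposal is correct, and for part (1) it coincides with the paper's proof: specialise Corollary \ref{cor2} (both monotonicity hypotheses hold when all bases equal $\beta$, so the two implications in each item merge into equivalences) and then rewrite $l^*(\frac{m}{\beta-1}-1)$ as $\overline{g^*(1)}$ via Proposition \ref{symmetric}. For part (2) the structure is also the same (the backward directions via part (1) plus strict monotonicity of $g$ and $l$, the lazy case by reflection, the unique case by Proposition \ref{max-min}), but you prove the key fact $(\star)$ --- that $g(y)\prec g^*(1)$ for all $y\in[0,1)$ --- by a different and noticeably longer route. The paper gets the forward direction of (2)\textcircled{\footnotesize{1}} directly from Lemma \ref{lemma}(1) with $\xi_+=1$ and $p=0$; unwinding that lemma, the point is exactly your observation that $[0,1)$ is $G$-invariant, after which $\sigma^nw=g(G^nx)\prec g^*(1)$ follows in one line from Proposition \ref{supincrease}(2) (``$x<y$ implies $g(x)\prec g^*(y)$''), with no case distinction on whether $g(1)$ is finite. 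Your alternative derivation of $(\star)$ --- splitting on whether $g(1)$ ends in $0^\infty$, using the periodic formula $g^*(1)=(b_1\cdots b_{k-1}(b_k-1))^\infty$ from Proposition \ref{quasi relations}(1)\textcircled{\footnotesize{2}}, and excluding greedy sequences strictly between $g^*(1)$ and $g(1)$ by a lexicographic argument that feeds back into part (1)\textcircled{\footnotesize{1}} at position $k$ --- is valid as written, and has the merit of making the combinatorial relation between $g(1)$ and $g^*(1)$ explicit; its cost is that it re-proves, in a special case, what Proposition \ref{supincrease} already supplies in general. If you streamline, replace your Case 1/Case 2 analysis by a single citation of Proposition \ref{supincrease}(2).
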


This corollary recovers some classical results. See for examples \cite[Theorem 1.1]{DK09}, \cite[Lemma 4]{GS01} and \cite[Theorem 3]{P60}. See also \cite[Theorem 2.1]{AFH07} and \cite[Lemma 2.11]{S03b}).

Many former papers on $\beta$-expansions are restricted to bases belonging to $(m,m+1]$ or expansion sequences belonging to $\{0,1,\cdots,\lceil\beta\rceil-1\}^\N$ (see for examples \cite{D09,DK09,K12}), where $\lceil\beta\rceil$ denotes the smallest integer no less than $\beta$. Even if all $\beta_0,\cdots,\beta_m$ are taken to be the same $\beta$ throughout this paper, we are working under a more general framework: bases belonging to $(1,m+1]$ and expansion sequences belonging to $\{0,1,\cdots,m\}^\N$ (for examples Corollary \ref{cor3} and Proposition \ref{symmetric}. See also \cite{B14,DKL16,KKLL19}).

This paper is organized as follows. In Section 2, we give some notations and study some basic properties of greedy, quasi-greedy, lazy and quasi-lazy expansions in multiple-bases. Section 3 is devoted to the proof our main results. In the last section, we present some further questions.

\section{Greedy, quasi-greedy, lazy and quasi-lazy expansions in multiple bases}

Let $m\in\N$ and $\beta_0,\cdots,\beta_m>1$. We define the \textit{projection} $\pi_{\beta_0,\cdots,\beta_m}$ by
$$\pi_{\beta_0,\cdots,\beta_m}(w_1\cdots w_n):=\sum_{i=1}^n\frac{w_i}{\beta_{w_1}\beta_{w_2}\cdots\beta_{w_i}}$$
for $w_1\cdots w_n\in\{0,\cdots,m\}^n$ and $n\in\N$, and
$$\pi_{\beta_0,\cdots,\beta_m}(w)=\pi_{\beta_0,\cdots,\beta_m}(w_1w_2\cdots):=\lim_{n\to\infty}\pi_{\beta_0,\cdots,\beta_m}(w_1\cdots w_n)=\sum_{i=1}^\infty\frac{w_i}{\beta_{w_1}\beta_{w_2}\cdots\beta_{w_i}}$$
for $w=(w_i)_{i\ge1}\in\{0,\cdots,m\}^n$. When $\beta_0,\cdots,\beta_m$ are understood from the context, we usually use $\pi$ instead of $\pi_{\beta_0,\cdots,\beta_m}$ for simplification.

\begin{definition}[Transformations and expansions]\label{tran and expan} Let $m\in\N$ and $(\beta_0,\cdots,\beta_m)\in D_m$.
\begin{itemize}
\item[(1)] The \textit{greedy $(\beta_0,\cdots,\beta_m)$-transformation} $G_{\beta_0,\cdots,\beta_m}:[0,\frac{m}{\beta_m-1}]\to[0,\frac{m}{\beta_m-1}]$ is defined by
$$x\mapsto G_{\beta_0,\cdots,\beta_m}x:=\left\{\begin{array}{ll}
T_kx & \text{if }x\in[a_k,a_{k+1})\text{ for some }k\in\{0,\cdots,m-1\};\\
T_mx & \text{if }x\in[a_m,b_m].
\end{array}\right.$$
For all $x\in[0,\frac{m}{\beta_m-1}]$ and $n\in\N$, let
$$g_n(x;\beta_0,\cdots,\beta_m):=\left\{\begin{array}{ll}
k& \text{if }G_{\beta_0,\cdots,\beta_m}^{n-1}x\in[a_k,a_{k+1})\text{ for some }k\in\{0,\cdots,m-1\};\\
m & \text{if }G_{\beta_0,\cdots,\beta_m}^{n-1}x\in[a_m,b_m].
\end{array}\right.$$
We call the sequence $g(x;\beta_0,\cdots,\beta_m):=(g_n(x;\beta_0,\cdots,\beta_m))_{n\ge1}$ the \textit{greedy $(\beta_0,\cdots,\beta_m)$-expansion} of $x$.
\item[(2)] The \textit{quasi-greedy $(\beta_0,\cdots,\beta_m)$-transformation} $G^*_{\beta_0,\cdots,\beta_m}:[0,\frac{m}{\beta_m-1}]\to[0,\frac{m}{\beta_m-1}]$ is defined by
$$x\mapsto G^*_{\beta_0,\cdots,\beta_m}x:=\left\{\begin{array}{ll}
T_0x & \text{if }x\in[0,a_1];\\
T_kx & \text{if }x\in(a_k,a_{k+1}]\text{ for some }k\in\{1,\cdots,m-1\};\\
T_mx & \text{if }x\in(a_m,b_m].
\end{array}\right.$$
For all $x\in[0,\frac{m}{\beta_m-1}]$ and $n\in\N$, let
$$g^*_n(x;\beta_0,\cdots,\beta_m):=\left\{\begin{array}{ll}
0& \text{if }(G^*_{\beta_0,\cdots,\beta_m})^{n-1}x\in[0,a_1];\\
k& \text{if }(G^*_{\beta_0,\cdots,\beta_m})^{n-1}x\in(a_k,a_{k+1}]\text{ for some }k\in\{1,\cdots,m-1\};\\
m & \text{if }(G^*_{\beta_0,\cdots,\beta_m})^{n-1}x\in(a_m,b_m].
\end{array}\right.$$
We call the sequence $g^*(x;\beta_0,\cdots,\beta_m):=(g^*_n(x;\beta_0,\cdots,\beta_m))_{n\ge1}$ the \textit{quasi-greedy $(\beta_0,\cdots,\beta_m)$-expansion} of $x$.
\item[(3)] The \textit{lazy $(\beta_0,\cdots,\beta_m)$-transformation} $L_{\beta_0,\cdots,\beta_m}:[0,\frac{m}{\beta_m-1}]\to[0,\frac{m}{\beta_m-1}]$ is defined by
$$x\mapsto L_{\beta_0,\cdots,\beta_m}x:=\left\{\begin{array}{ll}
T_0x & \text{if }x\in[0,b_0];\\
T_kx& \text{if }x\in(b_{k-1},b_k]\text{ for some }k\in\{1,\cdots,m\}.
\end{array}\right.$$
For all $x\in[0,\frac{m}{\beta_m-1}]$ and $n\in\N$, let
$$l_n(x;\beta_0,\cdots,\beta_m):=\left\{\begin{array}{ll}
0& \text{if }L_{\beta_0,\cdots,\beta_m}^{n-1}x\in[0,b_0];\\
k& \text{if }L_{\beta_0,\cdots,\beta_m}^{n-1}x\in(b_{k-1},b_k]\text{ for some }k\in\{1,\cdots,m\}.
\end{array}\right.$$
We call the sequence $l(x;\beta_0,\cdots,\beta_m):=(l_n(x;\beta_0,\cdots,\beta_m))_{n\ge1}$ the \textit{lazy $(\beta_0,\cdots,\beta_m)$-expansion} of $x$.
\item[(4)] The \textit{quasi-lazy $(\beta_0,\cdots,\beta_m)$-transformation} $L^*_{\beta_0,\cdots,\beta_m}:[0,\frac{m}{\beta_m-1}]\to[0,\frac{m}{\beta_m-1}]$ is defined by
$$x\mapsto L^*_{\beta_0,\cdots,\beta_m}x:=\left\{\begin{array}{ll}
T_0x & \text{if }x\in[0,b_0);\\
T_kx & \text{if }x\in[b_{k-1},b_k)\text{ for some }k\in\{1,\cdots,m-1\};\\
T_mx & \text{if }x\in[b_{m-1},b_m].
\end{array}\right.$$
For all $x\in[0,\frac{m}{\beta_m-1}]$ and $n\in\N$, let
$$l^*_n(x;\beta_0,\cdots,\beta_m):=\left\{\begin{array}{ll}
0& \text{if }(L^*_{\beta_0,\cdots,\beta_m})^{n-1}x\in[0,b_0);\\
k& \text{if }(L^*_{\beta_0,\cdots,\beta_m})^{n-1}x\in[b_{k-1},b_k)\text{ for some }k\in\{1,\cdots,m-1\};\\
m & \text{if }(L^*_{\beta_0,\cdots,\beta_m})^{n-1}x\in[b_{m-1},b_m].
\end{array}\right.$$
We call the sequence $l^*(x;\beta_0,\cdots,\beta_m):=(l^*_n(x;\beta_0,\cdots,\beta_m))_{n\ge1}$ the \textit{quasi-lazy $(\beta_0,\cdots,\beta_m)$-expansion} of $x$.
\end{itemize}
Generally, let $\cI_{\beta_0,\cdots,\beta_m}$ be the set of tuples $(I_0,\cdots,I_m)$ which satisfy
$$I_0\in\Big\{[0,c_1],[0,c_1)\Big\},$$
$$I_k\in\Big\{[c_k,c_{k+1}],[c_k,c_{k+1}),(c_k,c_{k+1}],(c_k,c_{k+1})\Big\}$$
for all $k\in\{1,\cdots,m-1\}$, and
$$I_m\in\Big\{[c_m,\frac{m}{\beta_m-1}],(c_m,\frac{m}{\beta_m-1}]\Big\},$$
where
$$c_k\in[a_k,b_{k-1}]\quad\text{for all }k\in\{1,\cdots,m\}$$
such that $c_1<c_2<\cdots<c_m$, $I_0\cup I_1\cup\cdots\cup I_m=[0,\frac{m}{\beta_m-1}]$ and $I_0,I_1,\cdots,I_m$ are all disjoint. For any $(I_0,\cdots,I_m)\in\cI_{\beta_0,\cdots,\beta_m}$, we define the \textit{$(I_0,\cdots,I_m)$-$(\beta_0,\cdots,\beta_m)$-transformation} $T_{\beta_0,\cdots,\beta_m}^{I_0,\cdots,I_m}:[0,\frac{m}{\beta_m-1}]\to[0,\frac{m}{\beta_m-1}]$ by
$$T_{\beta_0,\cdots,\beta_m}^{I_0,\cdots,I_m}(x):=T_k(x)\quad\text{for }x\in I_k\text{ where }k\in\{0,\cdots,m\}.$$
For all $x\in[0,\frac{m}{\beta_m-1}]$ and $n\in\N$, let
$$t_n(x;\beta_0,\cdots,\beta_m;I_0,\cdots,I_m):=k\quad\text{if }(T_{\beta_0,\cdots,\beta_m}^{I_0,\cdots,I_m})^{n-1}x\in I_k\text{ where }k\in\{0,\cdots,m\}.$$
We call the sequence $t(x;\beta_0,\cdots,\beta_m;I_0,\cdots,I_m):=(t_n(x;\beta_0,\cdots,\beta_m;I_0,\cdots,I_m))_{n\ge1}$ the \textit{$(I_0,\cdots,I_m)$-$(\beta_0,\cdots,\beta_m)$-expansion} of $x$.
\end{definition}

It is straightforward to see that greedy, quasi-greedy, lazy and quasi-lazy $(\beta_0,\cdots,\beta_m)$-transformations/expansions are special cases of some $(I_0,\cdots,I_m)$-$(\beta_0,\cdots,\beta_m)$-transfor-
mations/expansions. For simplification, on the one hand, if $\beta_0,\cdots,\beta_m$ are understood from the context, we use $G,G^*,L,L^*,g(x),g^*(x),l(x)$ and $l^*(x)$ instead of $G_{\beta_0,\cdots,\beta_m}$, $G^*_{\beta_0,\cdots,\beta_m}$, $L_{\beta_0,\cdots,\beta_m}$, $L^*_{\beta_0,\cdots,\beta_m}$, $g(x;\beta_0,\cdots,\beta_0)$, $g^*(x;\beta_0,\cdots,\beta_0)$, $l(x;\beta_0,\cdots,\beta_0)$ and $l^*(x;\beta_0,\cdots,\beta_0)$ respectively, and if $x$ is also understood, we use $g_n,g^*_n,l_n$ and $l^*_n$ instead of $g_n(x;\beta_0,\cdots,\beta_m)$, $g^*_n(x;\beta_0,\cdots,\beta_m)$, $l_n(x;\beta_0,\cdots,\beta_m)$ and $l^*_n(x;\beta_0,\cdots,\beta_m)$ respectively for all $n\in\N$; on the other hand, if $\beta_0,\cdots,\beta_m$ and $I_0,\cdots,I_m$ are understood, we use $T$ and $t(x)$ instead of $T_{\beta_0,\cdots,\beta_m}^{I_0,\cdots,I_m}$ and $t(x;\beta_0,\cdots,\beta_m;I_0,\cdots,I_m)$ respectively, and if $x$ is also understood, we use $t_n$ instead of $t_n(x;\beta_0,\cdots,\beta_m;I_0,\cdots,I_m)$ for all $n\in\N$.

For the case of a single base, greedy $\beta$-transformations and expansions were studied in many former papers \cite{B89,BB04,BW14,FW12,FS92,S97,S80}), lazy $\beta$-transformations and expansions can be found in \cite{DD07,DK02,DK16,EJK90,KS12}, and quasi-greedy $\beta$-expansions were introduced in \cite{KL07,L06,P05}.

In Proposition \ref{expansions}, we will see that the above definition really give $(\beta_0,\cdots,\beta_m)$-expansions coincide with Definition \ref{def m-b expansions}. First we prove the following useful lemma.

\begin{lemma}\label{iteration}
Let $m\in\N$, $(\beta_0,\cdots,\beta_m)\in D_m$ and $x\in[0,\frac{m}{\beta_m-1}]$. If $(I_0,\cdots,I_m)\in\cI_{\beta_0,\cdots,\beta_m}$, then for all $n\in\N$, we have
$$x=\pi(t_1\cdots t_n)+\frac{T^nx}{\beta_{t_1}\cdots\beta_{t_n}}.$$
In particular, for all $n\in\N$, we have
$$\begin{aligned}
x&=\pi(g_1\cdots g_n)+\frac{G^nx}{\beta_{g_1}\cdots\beta_{g_n}}=\pi(g^*_1\cdots g^*_n)+\frac{(G^*)^nx}{\beta_{g^*_1}\cdots\beta_{g^*_n}}\\
&=\pi(l_1\cdots l_n)+\frac{L^nx}{\beta_{l_1}\cdots\beta_{l_n}}=\pi(l^*_1\cdots l^*_n)+\frac{(L^*)^nx}{\beta_{l^*_1}\cdots\beta_{l^*_n}}.
\end{aligned}$$
\end{lemma}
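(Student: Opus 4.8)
The plan is to prove the general identity
$$x=\pi(t_1\cdots t_n)+\frac{T^nx}{\beta_{t_1}\cdots\beta_{t_n}}$$
by induction on $n$, and then to derive the four special cases by observing that the greedy, quasi-greedy, lazy and quasi-lazy transformations are all instances of an $(I_0,\dots,I_m)$-$(\beta_0,\dots,\beta_m)$-transformation. Throughout I write $T=T_{\beta_0,\cdots,\beta_m}^{I_0,\cdots,I_m}$ and $t_i=t_i(x;\beta_0,\cdots,\beta_m;I_0,\cdots,I_m)$.

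For the base case $n=1$: by definition $t_1=k$ where $x\in I_k$, and on $I_k$ we have $Tx=T_k(x)=\beta_k x-k$, so $x=\frac{k+Tx}{\beta_k}=\frac{t_1}{\beta_{t_1}}+\frac{Tx}{\beta_{t_1}}=\pi(t_1)+\frac{Tx}{\beta_{t_1}}$. For the inductive step, assume the identity holds for $n$. Apply the $n=1$ case to the point $T^nx\in[0,\frac{m}{\beta_m-1}]$ (this is where I use that $T$ maps the interval into itself, which is part of the definition of $\cI_{\beta_0,\cdots,\beta_m}$): writing $t_{n+1}$ for the digit produced by $T^nx$, which is exactly $t_{n+1}(x;\cdots)$ since $(T)^{n}x$ under one more application of the first-return digit rule gives the $(n+1)$-st digit, we get
$$T^nx=\pi(t_{n+1})+\frac{T^{n+1}x}{\beta_{t_{n+1}}}=\frac{t_{n+1}}{\beta_{t_{n+1}}}+\frac{T^{n+1}x}{\beta_{t_{n+1}}}.$$
Substituting this into the inductive hypothesis and using the telescoping definition of $\pi$,
$$x=\pi(t_1\cdots t_n)+\frac{1}{\beta_{t_1}\cdots\beta_{t_n}}\Big(\frac{t_{n+1}}{\beta_{t_{n+1}}}+\frac{T^{n+1}x}{\beta_{t_{n+1}}}\Big)=\pi(t_1\cdots t_{n+1})+\frac{T^{n+1}x}{\beta_{t_1}\cdots\beta_{t_{n+1}}},$$
where the last equality is just the definition $\pi(t_1\cdots t_{n+1})=\pi(t_1\cdots t_n)+\frac{t_{n+1}}{\beta_{t_1}\cdots\beta_{t_{n+1}}}$. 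This closes the induction.

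For the "in particular" clauses, I would check that each of $G,G^*,L,L^*$ equals $T_{\beta_0,\cdots,\beta_m}^{I_0,\cdots,I_m}$ for an admissible choice of $(I_0,\dots,I_m)\in\cI_{\beta_0,\cdots,\beta_m}$: for $G$ take $I_k=[a_k,a_{k+1})$ for $k<m$ and $I_m=[a_m,b_m]$ (here $c_k=a_k$, which lies in $[a_k,b_{k-1}]$ since $a_k\le b_{k-1}$ for $(\beta_0,\dots,\beta_m)\in D_m$); for $G^*$ take $I_0=[0,a_1]$, $I_k=(a_k,a_{k+1}]$, $I_m=(a_m,b_m]$; for $L$ take $c_k=b_{k-1}$ and the half-open intervals $[0,b_0],(b_{k-1},b_k]$; for $L^*$ take $c_k=b_{k-1}$ with $[0,b_0),[b_{k-1},b_k),[b_{m-1},b_m]$. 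In each case one verifies $c_1<\cdots<c_m$, the intervals are disjoint and cover $[0,\frac{m}{\beta_m-1}]$, which again uses the defining inequalities of $D_m$. Then $g=t(\,\cdot\,;I_0,\dots,I_m)$ for the greedy choice, and similarly for the others, so the four displayed identities are immediate specializations of the general one.

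The only genuine subtlety — and the step I would flag as the place to be careful rather than a real obstacle — is the bookkeeping showing that $t_{n+1}(x;\cdots)$ really is the first-step digit of $T^nx$, i.e. that iterating the digit map commutes with the shift in the expected way; this is essentially definitional from $t_n(x)=k\iff T^{n-1}x\in I_k$, but it must be stated cleanly so the substitution of the $n=1$ identity at the point $T^nx$ is justified. Everything else is routine algebra with the geometric weights $\beta_{t_1}\cdots\beta_{t_i}$.
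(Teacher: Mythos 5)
Your proof is correct and follows essentially the same route as the paper's: an induction on $n$ whose key step is the one-digit identity $t_{n+1}+T^{n+1}x=\beta_{t_{n+1}}T^nx$, valid because $t_{n+1}(x)=k$ exactly when $T^nx\in I_k$ (you apply this forward at the point $T^nx$, the paper verifies it backward, but it is the same computation). The verification that $G,G^*,L,L^*$ are instances of $(I_0,\cdots,I_m)$-transformations is likewise what the paper relies on, stated there as immediate from the definitions.
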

\begin{proof} (By induction) Let $k\in\{0,\cdots,m\}$ such that $x\in I_k$. Then $t_1=k$, $Tx=T_kx=\beta_kx-k$ and we have
$$\pi(t_1)+\frac{Tx}{\beta_{t_1}}=\frac{t_1+Tx}{\beta_{t_1}}=\frac{\beta_kx}{\beta_k}=x.$$
Suppose that the conclusion is true for some $n\in\N$, we prove that it is also true for $n+1$ as follows. In fact, we have
$$\begin{aligned}
\pi(t_1\cdots t_{n+1})+\frac{T^{n+1}x}{\beta_{t_1}\cdots\beta_{t_{n+1}}}&=\pi(t_1\cdots t_n)+\frac{t_{n+1}+T^{n+1}x}{\beta_{t_1}\cdots\beta_{t_{n+1}}}\\
&\overset{(\star)}{=}\pi(t_1\cdots t_n)+\frac{\beta_{t_{n+1}}T^{n}x}{\beta_{t_1}\cdots\beta_{t_{n+1}}}\\
&=x,
\end{aligned}$$
where the last equality follows from the inductive hypothesis and $(\star)$ can be proved as follows. Let $k\in\{0,\cdots,m\}$ such that $T^nx\in I_k$. Then $t_{n+1}=k$ and
$$t_{n+1}+T^{n+1}x=t_{n+1}+T_k(T^nx)=k+(\beta_kT^nx-k)=\beta_{t_{n+1}}T^nx.$$
\end{proof}

\begin{proposition}\label{expansions} Let $m\in\N$, $(\beta_0,\cdots,\beta_m)\in D_m$ and $x\in[0,\frac{m}{\beta_m-1}]$. If $(I_0,\cdots,I_m)\in\cI_{\beta_0,\cdots,\beta_m}$, then the $(I_0,\cdots,I_m)$-$(\beta_0,\cdots,\beta_m)$-expansion of $x$ is a $(\beta_0,\cdots,\beta_m)$-expansion of $x$, i.e.,
$$x=\pi(t(x)),$$
and for all $n\in\N$ we have
$$T^nx=\pi(t_{n+1}t_{n+2}\cdots).$$
In particular, greedy, quasi-greedy, lazy and quasi-lazy $(\beta_0,\cdots,\beta_m)$-expansions of $x$ are all $(\beta_0,\cdots,\beta_m)$-expansions of $x$, i.e.,
$$x=\pi(g(x))=\pi(g^*(x))=\pi(l(x))=\pi(l^*(x)),$$
and for all $n\in\N$ we have
$$G^nx=\pi(g_{n+1}g_{n+2}\cdots),\quad(G^*)^nx=\pi(g^*_{n+1}g^*_{n+2}\cdots),$$
$$L^nx=\pi(l_{n+1}l_{n+2}\cdots),\quad(L^*)^nx=\pi(l^*_{n+1}l^*_{n+2}\cdots).$$
\end{proposition}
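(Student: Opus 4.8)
The plan is to reduce everything to Lemma~\ref{iteration}. That lemma already gives, for every $n\in\N$ and every admissible tuple $(I_0,\dots,I_m)$,
$$x=\pi(t_1\cdots t_n)+\frac{T^nx}{\beta_{t_1}\cdots\beta_{t_n}},$$
so the only thing left to prove is that the remainder term tends to $0$ as $n\to\infty$. Since $T^nx$ always lies in $[0,\frac{m}{\beta_m-1}]$, the numerator is uniformly bounded, so it suffices to show that $\beta_{t_1}\cdots\beta_{t_n}\to\infty$. Here I would invoke the hypothesis $(\beta_0,\dots,\beta_m)\in D_m$: each $\beta_k>1$, so setting $\beta_{\min}:=\min_{0\le k\le m}\beta_k>1$ we get $\beta_{t_1}\cdots\beta_{t_n}\ge\beta_{\min}^{\,n}\to\infty$. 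Hence the remainder is at most $\frac{m}{(\beta_m-1)\,\beta_{\min}^n}\to 0$, and letting $n\to\infty$ in the displayed identity yields $x=\lim_n\pi(t_1\cdots t_n)=\pi(t(x))$, which is exactly the assertion $x=\pi(t(x))$, i.e.\ $t(x)$ is a $(\beta_0,\dots,\beta_m)$-expansion of $x$ in the sense of Definition~\ref{def m-b expansions}.

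For the second identity, $T^nx=\pi(t_{n+1}t_{n+2}\cdots)$, I would argue that the tail sequence $(t_{n+1},t_{n+2},\dots)$ is itself the $(I_0,\dots,I_m)$-$(\beta_0,\dots,\beta_m)$-expansion of the point $y:=T^nx$. This is immediate from the definition of $t_j$ via iterated application of $T=T^{I_0,\cdots,I_m}_{\beta_0,\cdots,\beta_m}$: by construction $t_{n+j}(x)$ records which interval $(T^{n+j-1}x)=(T^{j-1}(T^nx))$ lies in, which is precisely $t_j(y)$. Since $y\in[0,\frac{m}{\beta_m-1}]$ because $T$ maps this interval into itself, the first part applies to $y$ and gives $y=\pi(t(y))=\pi(t_{n+1}t_{n+2}\cdots)$, i.e.\ $T^nx=\pi(t_{n+1}t_{n+2}\cdots)$.

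Finally, the ``in particular'' statements follow by specialization: the greedy, quasi-greedy, lazy, and quasi-lazy transformations $G,G^*,L,L^*$ are all instances of $T^{I_0,\cdots,I_m}_{\beta_0,\cdots,\beta_m}$ for suitable admissible tuples $(I_0,\dots,I_m)\in\cI_{\beta_0,\cdots,\beta_m}$ — this was already observed in the text immediately after Definition~\ref{tran and expan}, and one only needs to check that the defining intervals (e.g.\ $[a_k,a_{k+1})$ for $G$, $(a_k,a_{k+1}]$ for $G^*$, etc.) satisfy the constraints defining $\cI_{\beta_0,\cdots,\beta_m}$, using $a_{k+1}\le b_k$ from the definition of $D_m$ so that the cutpoints can be taken as $c_{k+1}=a_{k+1}$ or $c_k=b_{k-1}$ as appropriate. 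Applying the two already-proved identities to these four choices gives $x=\pi(g(x))=\pi(g^*(x))=\pi(l(x))=\pi(l^*(x))$ and the four tail formulas $G^nx=\pi(g_{n+1}g_{n+2}\cdots)$, etc.

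I do not anticipate a genuine obstacle here; the only point requiring a little care is the uniform lower bound $\beta_{t_1}\cdots\beta_{t_n}\ge\beta_{\min}^n$ — one must note that every base is strictly greater than $1$, which is guaranteed by the very definition of $D_m$ — and the bookkeeping that each of $G,G^*,L,L^*$ genuinely arises from a tuple in $\cI_{\beta_0,\cdots,\beta_m}$. The latter is essentially already granted by the remark after Definition~\ref{tran and expan}, so the proof is short.
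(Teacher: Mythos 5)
Your proposal is correct and follows essentially the same route as the paper: reduce to Lemma~\ref{iteration} and note that the remainder $\frac{T^nx}{\beta_{t_1}\cdots\beta_{t_n}}$ is bounded by $\frac{m}{(\beta_m-1)(\min\{\beta_0,\dots,\beta_m\})^n}\to0$. The only (harmless) variation is in the tail identity $T^nx=\pi(t_{n+1}t_{n+2}\cdots)$: you obtain it by observing $t_j(T^nx)=t_{n+j}(x)$ and applying the first part to $T^nx$, whereas the paper gets it by comparing the decomposition of Lemma~\ref{iteration} with $x=\pi(t_1\cdots t_n)+\frac{\pi(t_{n+1}t_{n+2}\cdots)}{\beta_{t_1}\cdots\beta_{t_n}}$; both are immediate.
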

\begin{proof} By Lemma \ref{iteration} and
$$\frac{T^nx}{\beta_{t_1}\cdots\beta_{t_n}}\le\frac{\frac{m}{\beta_m-1}}{(\min\{\beta_0,\cdots,\beta_m\})^n}\to 0$$
as $n\to\infty$, we get $x=\lim_{n\to\infty}\pi(t_1\cdots t_n)=\pi(t(x))$. That is,
$$x=\pi(t_1\cdots t_n)+\frac{\pi(t_{n+1}t_{n+2}\cdots)}{\beta_{t_1}\cdots\beta_{t_n}}.$$
It follows from Lemma \ref{iteration} that $T^nx=\pi(t_{n+1}t_{n+2}\cdots)$.
\end{proof}

Greedy, quasi-greedy, lazy and quasi-lazy expansions are not necessarily identical. A real number may have many different expansions even in one given base (see for examples \cite{EHJ91,EJK90,S03a}).

\begin{proof}[Proof of Proposition \ref{m-b expansion}] $\boxed{\Rightarrow}$ follows from Proposition \ref{expansions}.
\newline$\boxed{\Leftarrow}$ Let $w\in\{0,\cdots,m\}^\N$ and $x=\pi(w)$. It suffices to prove $x\le\frac{m}{\beta_m-1}$ in the following. (By contradiction) We assume $x>\frac{m}{\beta_m-1}$.
\begin{itemize}
\item[(1)] Prove that for all $v\in\{0,\cdots,m\}^\N$ and $n\in\N$, we have
$$T_{v_n}\circ\cdots\circ T_{v_1}x>\cdots>T_{v_2}\circ T_{v_1}x>T_{v_1}x>x.$$
Let $k\in\{0,\cdots,m-1\}$, by $(\beta_0,\cdots,\beta_m)\in D_m$, we get
$$\frac{k}{\beta_k}+\frac{m}{\beta_k(\beta_m-1)}=b_k<b_{k+1}<\cdots<b_m=\frac{m}{\beta_m-1},$$
which implies $\frac{k}{\beta_k-1}<\frac{m}{\beta_m-1}$. Thus for all $y>\frac{m}{\beta_m-1}$ and $k\in\{0,\cdots,m\}$, we have $y>\frac{k}{\beta_k-1}$, i.e., $T_ky>y$. Then we perform the maps $T_{v_1},\cdots,T_{v_n}$ to $x$ one by one to get the conclusion.
\item[(2)] Let $s\in\{0,\cdots,m\}$ such that $T_sx=\min_{0\le k\le m}T_kx$. For all $n\in\N$, we prove
    $$T_{w_{n+1}}\circ\cdots\circ T_{w_1}x-T_{w_n}\circ\cdots\circ T_{w_1}x>T_sx-x.$$
    In fact, it suffices to prove
    $$T_{w_{n+1}}\circ\cdots\circ T_{w_1}x-T_{w_n}\circ\cdots\circ T_{w_1}x>T_{w_{n+1}}x-x.$$
    This follows from
$$\begin{aligned}
T_{w_{n+1}}\circ T_{w_n}\circ\cdots\circ T_{w_1}x-T_{w_{n+1}}x&=(\beta_{w_{n+1}}T_{w_n}\circ\cdots\circ T_{w_1}x-w_{n+1})-(\beta_{w_{n+1}}x-w_{n+1})\\
&=\beta_{w_{n+1}}(T_{w_n}\circ\cdots\circ T_{w_1}x-x)\\
&>T_{w_n}\circ\cdots\circ T_{w_1}x-x
\end{aligned}$$
where the last inequality follows from $\beta_{w_{n+1}}>1$ and $T_{w_n}\circ\cdots\circ T_{w_1}x-x>0$ (by (1)).
\item[(3)] Deduce a contradiction.
\newline On the one hand, for all $n\in\N$, we have
$$\begin{aligned}
T_{w_n}\circ\cdots\circ T_{w_1}x=&\indent(T_{w_n}\circ\cdots\circ T_{w_1}x-T_{w_{n-1}}\circ\cdots\circ T_{w_1}x)\\
&+(T_{w_{n-1}}\circ\cdots\circ T_{w_1}x-T_{w_{n-2}}\circ\cdots\circ T_{w_1}x)\\
&+\cdots\\
&+(T_{w_2}\circ T_{w_1}x-T_{w_1}x)\\
&+(T_{w_1}x-x)+x\\
\overset{\text{by }(2)}{\ge}&n(T_sx-x)+x,
\end{aligned}$$
where $T_sx-x>0$ by (1). This implies $T_{w_n}\circ\cdots\circ T_{w_1}x\to\infty$ as $n\to\infty$.
\newline On the other hand, by
$$x=\sum_{i=1}^\infty\frac{w_i}{\beta_{w_1}\cdots\beta_{w_i}},$$
we get
$$T_{w_1}x=\sum_{i=2}^\infty\frac{w_i}{\beta_{w_2}\cdots\beta_{w_i}},$$
$$T_{w_2}\circ T_{w_1}x=\sum_{i=3}^\infty\frac{w_i}{\beta_{w_3}\cdots\beta_{w_i}},$$
$$\cdots,$$
and then for all $n\in\N$,
$$\begin{aligned}
T_{w_n}\circ\cdots\circ T_{w_1}x&=\sum_{i=n+1}^\infty\frac{w_i}{\beta_{w_{n+1}}\cdots\beta_{w_i}}\\
&\le\sum_{i=n+1}^\infty\frac{m}{(\min\{\beta_0,\cdots,\beta_m\})^{i-n}}\\
&=\frac{m}{\min\{\beta_0,\cdots,\beta_m\}-1}<\infty,
\end{aligned}$$
which contradicts $T_{w_n}\circ\cdots\circ T_{w_1}x\to\infty$ as $n\to\infty$.
\end{itemize}
\end{proof}

We should keep the following lemma in mind.

\begin{lemma}\label{iff0}
Let $m\in\N$, $(\beta_0,\cdots,\beta_m)\in D_m$ and $w\in\{0,\cdots,m\}^\N$. Then $w=m^\infty$ if and only if $\pi(w)=\frac{m}{\beta_m-1}$.
\end{lemma}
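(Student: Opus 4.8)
The plan is to prove both directions directly from the formula for $\pi$. The direction $\boxed{\Rightarrow}$ is an immediate computation: if $w=m^\infty$, then every digit is $m$, so every partial denominator $\beta_{w_1}\cdots\beta_{w_i}$ equals $\beta_m^i$, and hence
$$\pi(w)=\sum_{i=1}^\infty\frac{m}{\beta_m^i}=\frac{m}{\beta_m-1},$$
using the geometric series sum (valid since $\beta_m>1$). So the real content is the converse.

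For $\boxed{\Leftarrow}$, I would argue by contraposition: assume $w\ne m^\infty$ and show $\pi(w)<\frac{m}{\beta_m-1}$. Let $n\ge 1$ be the first index with $w_n<m$ (so $w_1=\cdots=w_{n-1}=m$, possibly vacuously). Split the sum at $n$:
$$\pi(w)=\sum_{i=1}^{n-1}\frac{m}{\beta_m^i}+\frac{w_n}{\beta_m^{n-1}\beta_{w_n}}+\sum_{i=n+1}^\infty\frac{w_i}{\beta_{w_1}\cdots\beta_{w_i}}.$$
The key step is to bound the tail from $i=n+1$ onwards. Since $\beta_{w_1}\cdots\beta_{w_n}=\beta_m^{n-1}\beta_{w_n}$ and for $i>n$ each additional factor $\beta_{w_j}$ ($n<j\le i$) is at least $\min\{\beta_0,\dots,\beta_m\}$... but a cleaner route is to use the inequality $\frac{k}{\beta_k-1}\le\frac{m}{\beta_m-1}$ for all $k\in\{0,\dots,m\}$, which was already established in part (1) of the proof of Proposition \ref{m-b expansion} (it follows from $(\beta_0,\dots,\beta_m)\in D_m$). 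I would instead prefer the self-contained estimate via Lemma \ref{iteration} applied to a suitable transformation: pick any $(I_0,\dots,I_m)\in\cI_{\beta_0,\dots,\beta_m}$ for which $w$ is realized as a $t$-expansion — or, more simply, just directly estimate $\sum_{i=n+1}^\infty\frac{w_i}{\beta_{w_{n+1}}\cdots\beta_{w_i}}\le\frac{m}{\min\{\beta_0,\dots,\beta_m\}-1}$ exactly as done at the end of the proof of Proposition \ref{m-b expansion}, but this bound is too weak on its own. The sharp observation I would use is: from $b_k<b_{k+1}$ for $0\le k\le m-1$ we get $\frac{k}{\beta_k-1}<\frac{m}{\beta_m-1}$, hence for the tail
$$\sum_{i=n+1}^\infty\frac{w_i}{\beta_{w_{n+1}}\cdots\beta_{w_i}}=\frac{1}{\beta_m^{n-1}\beta_{w_n}}\cdot\Big(\beta_{w_n}\cdot\text{(tail starting at }i=n)\Big)$$
— so let me instead phrase the clean argument: apply Lemma \ref{iteration} with the greedy transformation is not guaranteed to produce $w$, so the honest approach is to compare $\pi(w)$ with $\pi(m^\infty)$ termwise after factoring out the common prefix.

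Concretely: writing $P:=\pi_{\beta_0,\dots,\beta_m}(\sigma^{n-1}w)$ for the value of the tail sequence $w_nw_{n+1}\cdots$, Lemma \ref{iteration}-type bookkeeping gives $\pi(w)=\sum_{i=1}^{n-1}\frac{m}{\beta_m^i}+\frac{P}{\beta_m^{n-1}}$, and the same identity with $m^\infty$ gives $\frac{m}{\beta_m-1}=\sum_{i=1}^{n-1}\frac{m}{\beta_m^i}+\frac{1}{\beta_m^{n-1}}\cdot\frac{m}{\beta_m-1}$. Thus it suffices to show $P<\frac{m}{\beta_m-1}$, i.e. that the tail sequence, whose first digit is $w_n<m$, projects strictly below the maximum. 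Now $P=\frac{w_n}{\beta_{w_n}}+\frac{1}{\beta_{w_n}}\pi(\sigma^n w)\le\frac{w_n}{\beta_{w_n}}+\frac{1}{\beta_{w_n}}\cdot\frac{m}{\beta_m-1}=\frac{1}{\beta_{w_n}}\big(w_n+\frac{m}{\beta_m-1}\big)$, using $\pi(\sigma^n w)\le\frac{m}{\beta_m-1}$ which is the already-proven direction $\Rightarrow$ of Proposition \ref{m-b expansion}. With $k:=w_n\le m-1$ this upper bound equals $\frac{k}{\beta_k}+\frac{m}{\beta_k(\beta_m-1)}=b_k$, and $b_k<b_m=\frac{m}{\beta_m-1}$ because $(\beta_0,\dots,\beta_m)\in D_m$. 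Hence $P\le b_k<\frac{m}{\beta_m-1}$, giving $\pi(w)<\frac{m}{\beta_m-1}$ as required. The main (and only real) obstacle is recognizing that the quantity $b_k$ from the definition of $D_m$ is exactly the supremum of tail-values beginning with digit $k$, so that the hypothesis $(\beta_0,\dots,\beta_m)\in D_m$ is precisely what forces the strict inequality; once that is seen, everything is a two-line computation.
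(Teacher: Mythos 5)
Your final argument is correct and is essentially the paper's own proof: both peel off the leading block $m^{n-1}$, reduce to the tail beginning with the first digit $w_n<m$, bound that tail by $b_{w_n}$ using $\pi(\sigma^n w)\le\frac{m}{\beta_m-1}$ from Proposition \ref{m-b expansion}, and conclude from $b_{w_n}<b_m$ in the definition of $D_m$ (the paper phrases this as a contradiction via applying $T_m^{k-1}$ and $T_{w_k}$, you as a contraposition, which is the same computation). The only blemishes are the several abandoned false starts in your write-up and a mislabelling of which direction of Proposition \ref{m-b expansion} supplies the bound $\pi(v)\le\frac{m}{\beta_m-1}$; neither affects correctness.
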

\begin{proof}
$\boxed{\Rightarrow}$ is obvious.
\newline$\boxed{\Leftarrow}$ (By contradiction) Suppose $w\neq m^\infty$ and
\begin{eqnarray}\label{suppose}
\sum_{i=1}^\infty\frac{w_i}{\beta_{w_1}\cdots\beta_{w_i}}=\frac{m}{\beta_m-1}.
\end{eqnarray}
Then there exists $k\in\N$ such that $w_1\cdots w_{k-1}=m^{k-1}$ and $w_k<m$. By applying $T_m^{k-1}$ to (\ref{suppose}), we get
$$\frac{w_k}{\beta_{w_k}}+\sum_{i=k+1}^\infty\frac{w_i}{\beta_{w_k}\cdots\beta_{w_i}}=\frac{m}{\beta_m-1}.$$
It follows from applying $T_{w_k}$ to the above equality that
\begin{eqnarray}\label{get1}
\sum_{i=1}^\infty\frac{w_{k+i}}{\beta_{w_{k+1}}\cdots\beta_{w_{k+i}}}=\frac{m\beta_{w_k}}{\beta_m-1}-w_k.
\end{eqnarray}
On the one hand, by Proposition \ref{m-b expansion} we know
\begin{eqnarray}\label{get2}
\sum_{i=1}^\infty\frac{w_{k+i}}{\beta_{w_{k+1}}\cdots\beta_{w_{k+i}}}\le\frac{m}{\beta_m-1}.
\end{eqnarray}
On the other hand, by $(\beta_0,\cdots,\beta_m)\in D_m$ and $w_k<m$, we get
$$\frac{w_k}{\beta_{w_k}}+\frac{m}{\beta_{w_k}(\beta_m-1)}=b_{w_k}<b_{w_k+1}<\cdots<b_m=\frac{m}{\beta_m-1},$$
which implies
$$\frac{m\beta_{w_k}}{\beta_m-1}-w_k>\frac{m}{\beta_m-1}.$$
This contradicts (\ref{get1}) and (\ref{get2}).
\end{proof}

The following useful criteria generalize \cite[Lemma 1]{EJK90}.

\begin{proposition}[Basic criteria of greedy, quasi-greedy, lazy and quasi-lazy expansions]\label{iff}\indent
\newline Let $m\in\N$, $(\beta_0,\cdots,\beta_m)\in D_m$, $x\in[0,\frac{m}{\beta_m-1}]$ and $w\in\{0,\cdots,m\}^\N$ be a $(\beta_0,\cdots,\beta_m)$-expansion of $x$.
\newline(1) $w$ is the greedy expansion if and only if
$$\pi(w_nw_{n+1}\cdots)<a_{w_n+1}\quad\text{whenever }w_n<m.$$
(2) When $x\neq0$, $w$ is the quasi-greedy expansion if and only if it does not end with $0^\infty$ and
$$\pi(w_nw_{n+1}\cdots)\le a_{w_n+1}\quad\text{whenever }w_n<m.$$
(3) $w$ is the lazy expansion if and only if
$$\pi(w_nw_{n+1}\cdots)>b_{w_n-1}\quad\text{whenever }w_n>0.$$
(4) When $x\neq\frac{m}{\beta_m-1}$, $w$ is the quasi-lazy expansion if and only if it does not end with $m^\infty$ and
$$\pi(w_nw_{n+1}\cdots)\ge b_{w_n-1}\quad\text{whenever }w_n>0.$$
\end{proposition}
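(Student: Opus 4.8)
The plan is to prove all four parts simultaneously in spirit, since they are dual to each other: parts (3) and (4) follow from (1) and (2) by the natural symmetry $k\mapsto m-k$ (one could invoke Proposition \ref{symmetric} if it has been set up, but since the statement is self-contained I will just indicate that the lazy cases are obtained by mirroring the greedy argument with $a$'s replaced by $b$'s and inequalities reversed). So the core work is parts (1) and (2). Throughout I will use the key identity from Proposition \ref{expansions}, namely that if $w$ is \emph{any} $(\beta_0,\cdots,\beta_m)$-expansion of $x$ then for each $n$ the tail satisfies $\pi(w_nw_{n+1}\cdots)=T_{w_{n-1}}\circ\cdots\circ T_{w_1}x$ in the sense that successive applications of the linear maps $T_{w_i}$ peel off digits; more precisely $\pi(w_nw_{n+1}\cdots)\in[0,\frac{m}{\beta_m-1}]$ always (by Proposition \ref{m-b expansion}), and applying $T_{w_n}$ to it gives $\pi(w_{n+1}w_{n+2}\cdots)$.

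For part (1), forward direction: if $w=g(x)$ is the greedy expansion, then by definition $g_n(x)=k$ exactly when $G^{n-1}x\in[a_k,a_{k+1})$ (or $[a_m,b_m]$), and by Proposition \ref{expansions} we have $G^{n-1}x=\pi(w_nw_{n+1}\cdots)$. Hence when $w_n=k<m$ we get $\pi(w_nw_{n+1}\cdots)=G^{n-1}x<a_{k+1}=a_{w_n+1}$, which is exactly the claimed inequality. Conversely, suppose $w$ is a $(\beta_0,\cdots,\beta_m)$-expansion of $x$ satisfying $\pi(w_nw_{n+1}\cdots)<a_{w_n+1}$ whenever $w_n<m$. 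I want to show $w=g(x)$, i.e., that $w$ is produced by iterating $G$. I would argue by induction on $n$ that $\pi(w_nw_{n+1}\cdots)=G^{n-1}x$ and $w_n=g_n(x)$. The base case $n=1$: we need $x=\pi(w)$ to lie in the correct interval so that the greedy algorithm picks $w_1$. Here is the crucial point: the hypothesis $\pi(w_nw_{n+1}\cdots)<a_{w_n+1}$ (when $w_n<m$) combined with the \emph{lower} bound $\pi(w_nw_{n+1}\cdots)\ge \frac{w_n}{\beta_{w_n}}=a_{w_n}$ — which holds because all terms in the series $\sum_{i\ge n}\frac{w_i}{\beta_{w_n}\cdots\beta_{w_i}}$ are nonnegative and the first one is $a_{w_n}$ — forces $\pi(w_nw_{n+1}\cdots)\in[a_{w_n},a_{w_n+1})$, which is precisely the defining interval of $G$ for digit $w_n$. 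When $w_n=m$ the lower bound $\pi\ge a_m$ together with $\pi\le b_m$ (Proposition \ref{m-b expansion}) again puts us in $[a_m,b_m]$. So $G(\pi(w_nw_{n+1}\cdots))=T_{w_n}(\pi(w_nw_{n+1}\cdots))=\pi(w_{n+1}w_{n+2}\cdots)$, closing the induction, and therefore $w_n=g_n(x)$ for all $n$.

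For part (2), the quasi-greedy case is the same induction but with the half-open/closed intervals $(a_k,a_{k+1}]$ of $G^*$ and with the endpoint subtleties. Forward direction is immediate from the definition of $G^*$ and Proposition \ref{expansions}: $w_n<m$ gives $\pi(w_nw_{n+1}\cdots)=(G^*)^{n-1}x\in(a_{w_n},a_{w_n+1}]$ when $w_n\ge 1$ and $\in[0,a_1]$ when $w_n=0$, hence $\le a_{w_n+1}$; and the quasi-greedy expansion of a nonzero $x$ never ends in $0^\infty$ because $G^*$ never maps a point to $0$ unless it was already $0$, while $(G^*)^{n-1}x=\pi(w_nw_{n+1}\cdots)$ would then be $0$ forcing all preceding tails and hence $x$ to vanish — I should state this carefully. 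Conversely, given $w$ not ending in $0^\infty$ with $\pi(w_nw_{n+1}\cdots)\le a_{w_n+1}$ whenever $w_n<m$: the induction needs $\pi(w_nw_{n+1}\cdots)\in(a_{w_n},a_{w_n+1}]$ (for $w_n\ge1$). The upper bound is the hypothesis. For the \emph{strict} lower bound $\pi(w_nw_{n+1}\cdots)>a_{w_n}$: equality $\pi(w_nw_{n+1}\cdots)=a_{w_n}=\frac{w_n}{\beta_{w_n}}$ would force $\sum_{i\ge n+1}\frac{w_i}{\beta_{w_n}\cdots\beta_{w_i}}=0$, i.e.\ $w_i=0$ for all $i>n$, contradicting that $w$ does not end in $0^\infty$ — \emph{provided} $w_n\ge 1$, and if instead $w_n=0$ we are in $[0,a_1]$ which is the $G^*$-interval for digit $0$, so no contradiction is needed there. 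This case distinction — digit zero versus digit $\ge 1$, and tracking which endpoint is included — is the main delicate point; the structural induction itself is routine once the interval-membership is pinned down. The lazy/quasi-lazy parts (3),(4) are then handled by the same scheme using the $b_k$'s, the maps $T_k$ still, the intervals $(b_{k-1},b_k]$ resp.\ $[b_{k-1},b_k)$, and Lemma \ref{iff0} playing the role for $m^\infty$ that the no-$0^\infty$ observation plays above.
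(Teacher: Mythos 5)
Your proposal is correct and follows essentially the same route as the paper: forward directions via the definitions together with Proposition \ref{expansions} (so that $\pi(w_nw_{n+1}\cdots)$ equals the $n$-th iterate of the relevant transformation), and converse directions by induction, pinning each tail into the correct defining interval using the trivial lower bound $\pi(w_nw_{n+1}\cdots)\ge a_{w_n}$ (resp.\ upper bound $\le b_{w_n}$) and handling the endpoint cases via the non-$0^\infty$ condition and Lemma \ref{iff0}. One small caution: your parenthetical suggestion that (3)--(4) could be deduced from (1)--(2) via the symmetry $k\mapsto m-k$ (Proposition \ref{symmetric}) only works when all bases coincide, but since you in fact mirror the argument directly with the $b_k$'s -- exactly as the paper does -- this does not affect the proof.
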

\begin{proof} (1) $\boxed{\Rightarrow}$ Suppose that $w$ is the greedy $(\beta_0,\cdots,\beta_m)$-expansion of $x$, i.e., $(w_i)_{i\ge1}=(g_i)_{i\ge1}$, and suppose $w_n<m$. By $g_n=w_n$ and the definition of $g_n$, we get $G^{n-1}x<a_{w_n+1}$. It follows from Proposition \ref{expansions} that $\pi(g_ng_{n+1}\cdots)<a_{w_n+1}$. Thus $\pi(w_nw_{n+1}\cdots)<a_{w_n+1}$.
\newline$\boxed{\Leftarrow}$ We prove $(w_i)_{i\ge1}=(g_i)_{i\ge1}$ by induction. Recall that
$$g_1:=\left\{\begin{array}{ll}
k & \text{if }x\in[a_k,a_{k+1})\text{ for some }k\in\{0,\cdots,m-1\}\\
m & \text{if }x\in[a_m,b_m]
\end{array}\right.$$
and $(w_i)_{i\ge1}$ is a $(\beta_0,\cdots,\beta_m)$-expansion of $x$, which implies $x\ge a_{w_1}$.
\begin{itemize}
\item[i)] If $w_1=m$, then $x\ge a_m$, which implies $g_1=m=w_1$.
\item[ii)] If $w_1<m$, by condition $\pi(w_1w_2\cdots)<a_{w_1+1}$ we get $x<a_{w_1+1}$. It follows from $x\ge a_{w_1}$ that $g_1=w_1$.
\end{itemize}
Suppose $w_1\cdots w_{n-1}=g_1\cdots g_{n-1}$ for some $n\ge2$. We need to prove $w_n=g_n$ in the following. Recall
$$g_n:=\left\{\begin{array}{ll}
k & \text{if }G^{n-1}x\in[a_k,a_{k+1})\text{ for some }k\in\{0,\cdots,m-1\};\\
m & \text{if }G^{n-1}x\in[a_m,b_m].
\end{array}\right.$$
Since the fact that $(w_i)_{i\ge1}$ is a $(\beta_0,\cdots,\beta_m)$-expansion of $x$ implies
$$x=\pi(w_1\cdots w_{n-1})+\frac{\pi(w_nw_{n+1}\cdots)}{\beta_{w_1}\cdots\beta_{w_{n-1}}},$$
by Lemma \ref{iteration} we know $G^{n-1}x=\pi(w_nw_{n+1}\cdots)$. This implies $G^{n-1}x\ge a_{w_n}$.
\begin{itemize}
\item[i)] If $w_n=m$, then $G^{n-1}x\ge a_m$, which implies $g_n=m=w_n$.
\item[ii)] If $w_n<m$, by condition $\pi(w_nw_{n+1}\cdots)<a_{w_n+1}$ we get $G^{n-1}x<a_{w_n+1}$. It follows from $G^{n-1}x\ge a_{w_n}$ that $g_n=w_n$.
\end{itemize}
(2) $\boxed{\Rightarrow}$ Suppose that $w$ is the quasi-greedy $(\beta_0,\cdots,\beta_m)$-expansion of $x$, i.e., $(w_i)_{i\ge1}=(g^*_i)_{i\ge1}$.
\begin{itemize}
\item[i)] Prove that $w$ does not end with $0^\infty$.
\newline(By contradiction) Assume that there exists $n\in\N$ such that $w_{n+1}w_{n+2}\cdots=0^\infty$. By Proposition \ref{expansions}, we get $(G^*)^nx=\pi(0^\infty)=0$. It follows from the definition of $G^*$ that $(G^*)^{n-1}x=0$, $(G^*)^{n-2}x=0$, $\cdots$, $G^*x=0$ and $x=0$, which contradicts $x\neq0$.
\item[ii)] Suppose $w_n<m$. Similarly to (1) $\boxed{\Rightarrow}$, we get $\pi(w_nw_{n+1}\cdots)\le a_{w_n+1}$.
\end{itemize}
$\boxed{\Leftarrow}$ follows in a way similar to (1) $\boxed{\Leftarrow}$.
\newline(3) and (4) follow in a way similar to (1) and (2) noting Lemma \ref{iff0}.
\end{proof}

\begin{proposition}[Lexicographic order on greedy, quasi-greedy, lazy and quasi-lazy expansions]\label{max-min} Let $m\in\N$, $(\beta_0,\cdots,\beta_m)\in D_m$ and $x\in[0,\frac{m}{\beta_m-1}]$.
\begin{itemize}
\item[(1)] Among all the $(\beta_0,\cdots,\beta_m)$-expansions of $x$, the greedy expansion and the lazy expansion are maximal and minimal respectively in lexicographic order.
\item[(2)] Among all the $(\beta_0,\cdots,\beta_m)$-expansions of $x$ which do not end with $0^\infty$, the quasi-greedy expansion is maximal in lexicographic order.
\item[(3)] Among all the $(\beta_0,\cdots,\beta_m)$-expansions of $x$ which do not end with $m^\infty$, the quasi-lazy expansion is minimal in lexicographic order.
\end{itemize}
\end{proposition}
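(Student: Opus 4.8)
The plan is to handle all three parts by a single mechanism: a ``first disagreement'' argument driven by the criteria of Proposition~\ref{iff}. Fix $x$, write $\widehat w$ for the distinguished expansion in question (respectively $g(x)$, $l(x)$, $g^*(x)$, $l^*(x)$), and let $v$ be any other $(\beta_0,\cdots,\beta_m)$-expansion of $x$; assume toward a contradiction that $v$ violates the asserted extremality, and let $n$ be the least index with $v_n\ne\widehat w_n$. Regrouping the (convergent) series defining $x$ at position $n$ for both $v$ and $\widehat w$, and using that $v$ and $\widehat w$ share the first $n-1$ digits (so the first $n-1$ base-products agree), one gets the key identity $\pi(v_nv_{n+1}\cdots)=\pi(\widehat w_n\widehat w_{n+1}\cdots)$; by Proposition~\ref{expansions} the right-hand side equals $G^{n-1}x$, $L^{n-1}x$, $(G^*)^{n-1}x$ or $(L^*)^{n-1}x$ respectively. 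Denote this common value by $y$.

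The second ingredient is the pair of elementary tail bounds: for every $u=u_nu_{n+1}\cdots\in\{0,\cdots,m\}^\N$,
$$a_{u_n}\le\pi(u_nu_{n+1}\cdots)\le b_{u_n},$$
where the lower bound keeps only the first term of the series and the upper bound follows from $\pi(u_{n+1}u_{n+2}\cdots)\le\frac{m}{\beta_m-1}$ (Proposition~\ref{m-b expansion}); moreover equality on the left forces $u_{n+1}u_{n+2}\cdots=0^\infty$, and equality on the right forces $u_{n+1}u_{n+2}\cdots=m^\infty$ (the latter by Lemma~\ref{iff0}).

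Given these, the parts follow quickly. For (1): if $v\succ g(x)$ then $v_n>g_n$, so $g_n<m$ and Proposition~\ref{iff}(1) gives $y<a_{g_n+1}$; but the lower tail bound together with strict monotonicity of $(a_k)_k$ on $D_m$ gives $y=\pi(v_nv_{n+1}\cdots)\ge a_{v_n}\ge a_{g_n+1}$, a contradiction. The lazy half is the mirror image, using Proposition~\ref{iff}(3), the upper tail bound, and strict monotonicity of $(b_k)_k$. For (2), assuming $x\ne0$ (the case $x=0$ being vacuous, since $0^\infty$ is the only expansion of $0$): if $v$ does not end with $0^\infty$ but $v\succ g^*(x)$, then $v_n>g^*_n$, $g^*_n<m$, and Proposition~\ref{iff}(2) gives $y\le a_{g^*_n+1}$; combined with $y\ge a_{v_n}\ge a_{g^*_n+1}$ this squeezes $y=a_{v_n}$, whence the left-equality case shows $v$ ends with $0^\infty$, a contradiction. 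Part (3) is the mirror image of (2), assuming $x\ne\frac{m}{\beta_m-1}$ (vacuous otherwise by Lemma~\ref{iff0}), using Proposition~\ref{iff}(4), the bound $y\le b_{v_n}\le b_{l^*_n-1}$, and the right-equality case to force $v$ to end with $m^\infty$.

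The step I expect to need the most care is the squeeze in parts (2) and (3): one must confirm that the forced equality $a_{v_n}=y=a_{g^*_n+1}$ (respectively its $b$-analogue) is precisely the equality case of the elementary tail bound, so that $v$ is pinned to terminating in $0^\infty$ (respectively $m^\infty$), contradicting the standing hypothesis on $v$; it is also here that strict monotonicity of $(a_k)_k$ and $(b_k)_k$ on $D_m$ is used, both to collapse the chains of inequalities and, incidentally, to recover $v_n=g^*_n+1$ (respectively $v_n=l^*_n-1$). The remaining pieces, namely the first-disagreement identity for $y$ and the two tail bounds, are routine.
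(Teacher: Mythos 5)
Your proposal is correct and is essentially the paper's own argument: a first-disagreement index, Proposition~\ref{iff} applied to the distinguished expansion's tail, and the monotonicity of $(a_k)$ and $(b_k)$ coming from $D_m$, with the non-$0^\infty$/non-$m^\infty$ hypotheses supplying the strict inequality in the quasi cases. The only cosmetic difference is that you cancel the common prefix and compare the two tails directly at the value $y$, whereas the paper keeps the prefix and derives the contradiction $x<\pi(v_1\cdots v_n)\le\pi(v)=x$.
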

\begin{proof} (1) Let $v\in\{0,\cdots,m\}^\N$ be a $(\beta_0,\cdots,\beta_m)$-expansion of $x$.
\begin{itemize}
\item[\textcircled{\footnotesize{1}}] Prove $v\preceq g(x)$.
\newline(By contradiction) Assume $v\succ g(x)$. Then there exists $n\in\N$ such that $v_1\cdots v_{n-1}=g_1\cdots g_{n-1}$ and $v_n>g_n$. Since Proposition \ref{iff} (1) implies $\pi(g_ng_{n+1}\cdots)<a_{g_n+1}$ and $(\beta_0,\cdots,\beta_m)\in D_m$ implies $a_{g_n+1}\le a_{g_n+2}\le\cdots\le a_{v_n}=\frac{v_n}{\beta_{v_n}}$,
we get $\pi(g_ng_{n+1}\cdots)<\frac{v_n}{\beta_{v_n}}$ and then
$$\begin{aligned}
x=\pi(g(x))&=\pi(g_1\cdots g_{n-1})+\frac{\pi(g_ng_{n+1}\cdots)}{\beta_{g_1}\cdots\beta_{g_{n-1}}}\\
&<\pi(v_1\cdots v_{n-1})+\frac{v_n}{\beta_{v_1}\cdots\beta_{v_{n-1}}\beta_{v_n}}\\
&=\pi(v_1\cdots v_n)\\
&\le\pi(v).
\end{aligned}$$
This contradicts $x=\pi(v)$.
\item[\textcircled{\footnotesize{2}}] We can prove $v\succeq l(x)$ in a way similar to \textcircled{\footnotesize{1}} noting that Proposition \ref{m-b expansion} implies $\frac{m}{\beta_m-1}\ge\pi(v_{n+1}v_{n+2}\cdots)$.
\end{itemize}
(2) and (3) follow in the same way as (1), noting that $v$ does not end with $0^\infty$ implies $\pi(v_1\cdots v_n)<\pi(v)$, and $v$ does not end with $m^\infty$ implies $\frac{m}{\beta_m-1}>\pi(v_{n+1}v_{n+2}\cdots)$ by Proposition \ref{m-b expansion} and Lemma \ref{iteration} for all $n\in\N$.
\end{proof}

The following definition on admissibility is a natural generalization of \cite[Definition 2.1 (2)]{LL18} (see also \cite[Definition 2.1]{LW08}).

\begin{definition}[Admissibility]\label{admissibility}
Let $m\in\N$ and $(\beta_0,\cdots,\beta_m)\in D_m$. For fixed $(I_0,\cdots,I_m)\in\cI_{\beta_0,\cdots,\beta_m}$, a sequence $w\in\{0,\cdots,m\}^\N$ is called $(I_0,\cdots,I_m)$-admissible if there exists $x\in[0,\frac{m}{\beta_m-1}]$ such that $w=t(x)$. We let $\cT=\cT(\beta_0,\cdots,\beta_m;I_0,\cdots,I_m)$ denote the set of $(I_0,\cdots,I_m)$-admissible sequences. In particular, a sequence $w\in\{0,\cdots,m\}^\N$ is called greedy, quasi-greedy, lazy and quasi-lazy (admissible) if there exists $x\in[0,\frac{m}{\beta_m-1}]$ such that $w=g(x)$, $g^*(x)$, $l(x)$ and $l^*(x)$ respectively. The sets of greedy, quasi-greedy, lazy and quasi-lazy sequences are denoted respectively by $\cG=\cG(\beta_0,\cdots,\beta_m)$, $\cG^*=\cG^*(\beta_0,\cdots,\beta_m)$, $\cL=\cL(\beta_0,\cdots,\beta_m)$ and $\cL^*=\cL^*(\beta_0,\cdots,\beta_m)$.
\end{definition}

\begin{proposition}[Commutativity]\label{commutative}
Let $m\in\N$, $(\beta_0,\cdots,\beta_m)\in D_m$ and $(I_0,\cdots,I_m)\in\cI_{\beta_0,\cdots,\beta_m}$. Then
\begin{itemize}
\item[(1)] $\pi\circ\sigma(w)=T\circ\pi(w)$ for all $w\in\cT$ and $t\circ T(x)=\sigma\circ t(x)$ for all $x\in[0,\frac{m}{\beta_m-1}]$;
\item[(2)] $\sigma(\cT)=\cT$ and $T([0,\frac{m}{\beta_m-1}])=[0,\frac{m}{\beta_m-1}]$;
\item[(3)] $t\circ\pi(w)=w$ for all $w\in\cT$ and $\pi\circ t(x)=x$ for all $x\in[0,\frac{m}{\beta_m-1}]$;
\item[(4)] $\pi|_{\cT}:\cT\to[0,\frac{m}{\beta_m-1}]$ and $t:[0,\frac{m}{\beta_m-1}]\to\cT$ are both increasing bijections.
\end{itemize}
$$\xymatrix{
\cT \ar[r]^\sigma \ar@<-0.5ex>[d]_{\pi} & \cT \ar@<-0.5ex>[d]_{\pi} \\
[0,\frac{m}{\beta_m-1}] \ar[r]^T \ar@<-0.5ex>[u]_{t}& [0,\frac{m}{\beta_m-1}] \ar@<-0.5ex>[u]_{t}
} $$
In particular, the above properties hold for the greedy, quasi-greedy, lazy and quasi-lazy cases.
\end{proposition}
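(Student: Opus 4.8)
The plan is to derive (1)--(4) from Proposition~\ref{expansions}, treating the identities that mix $\pi$ and $t$ as formal bookkeeping and isolating the one genuinely quantitative point, the surjectivity of $T$ in~(2). First I would note that $t\circ T=\sigma\circ t$ on $[0,\frac{m}{\beta_m-1}]$ is immediate from the definition of $t$: for $n\in\N$, $t_n(Tx)$ is the index $k$ with $(T)^{n-1}(Tx)=T^nx\in I_k$, which is exactly $t_{n+1}(x)$ because $(I_0,\cdots,I_m)$ is a partition of $[0,\frac{m}{\beta_m-1}]$ and $T$ maps this set into itself by construction. Proposition~\ref{expansions} gives $\pi\circ t=\id$ on $[0,\frac{m}{\beta_m-1}]$; hence for $w\in\cT$, writing $w=t(x)$ we get $\pi(w)=x$ and $t(\pi(w))=t(x)=w$, which is $t\circ\pi=\id$ on $\cT$, i.e.\ (3); in particular $\pi|_\cT$ and $t$ are mutually inverse and so bijective, which is half of~(4). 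Combining, for $w=t(x)\in\cT$ we get $\pi(\sigma w)=\pi(\sigma t(x))=\pi(t(Tx))=Tx=T(\pi(w))$ --- using in turn $w=t(x)$, $\sigma\circ t=t\circ T$, $\pi\circ t=\id$, and $\pi(w)=x$ --- which is the remaining identity in~(1). Also $\sigma(\cT)\subseteq\cT$ follows from $\sigma\circ t=t\circ T$ and $T([0,\frac{m}{\beta_m-1}])\subseteq[0,\frac{m}{\beta_m-1}]$, and this last inclusion is part of the definition of $T$.

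To finish~(2) it suffices to prove that $T$ is \emph{surjective} onto $[0,\frac{m}{\beta_m-1}]$: then, given $w=t(x)\in\cT$ and $y$ with $Ty=x$, one has $\sigma(t(y))=t(Ty)=t(x)=w$, so $\cT\subseteq\sigma(\cT)$, and also $T([0,\frac{m}{\beta_m-1}])=[0,\frac{m}{\beta_m-1}]$. For surjectivity I would record the one-line identities $T_k(a_k)=0$ and $T_k(b_k)=\frac{m}{\beta_m-1}$, valid for every $k$ (direct from the formulas for $a_k,b_k,T_k$), so that each $T_k$ is an increasing affine bijection of $[a_k,b_k]$ onto $[0,\frac{m}{\beta_m-1}]$ and $I_k\subseteq[a_k,b_k]$ (since $a_k\le c_k$ and $c_{k+1}\le b_k$, reading $c_0=0$, $c_{m+1}=\frac{m}{\beta_m-1}$). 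The crucial estimate is $T_{k-1}(c_k)>T_k(c_k)$ at every interior breakpoint $c_k$: since $T_{k-1}(c_k)-T_k(c_k)=(\beta_{k-1}-\beta_k)c_k+1$, this is $\ge 1$ when $\beta_{k-1}\ge\beta_k$, and when $\beta_{k-1}<\beta_k$ one bounds $c_k\le b_{k-1}$ and uses $\beta_{k-1}b_{k-1}=(k-1)+\frac{m}{\beta_m-1}$ together with $\beta_kb_k=k+\frac{m}{\beta_m-1}$ to rewrite it as $\ge\beta_k(b_k-b_{k-1})$, which is positive by $b_{k-1}<b_k$ (part of the definition of $D_m$). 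Consequently $T$ is increasing on each $I_k$ and drops strictly at each $c_k$, so consecutive images $T(I_{k-1})$ and $T(I_k)$ overlap; by induction $\bigcup_kT(I_k)$ is a single subinterval of $[0,\frac{m}{\beta_m-1}]$, and it contains $0=T_0(0)\in T(I_0)$ and $\frac{m}{\beta_m-1}=T_m(\frac{m}{\beta_m-1})\in T(I_m)$, hence equals $[0,\frac{m}{\beta_m-1}]$.

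The only remaining point for~(4) is that $\pi|_\cT$ is increasing from the lexicographic order to the usual order (then its inverse $t$ is increasing too). Given $w\prec w'$ in $\cT$ with first disagreement at coordinate $n$, the common prefix cancels and $\pi(w')-\pi(w)$ is a positive multiple of $\pi(\sigma^{n-1}w')-\pi(\sigma^{n-1}w)$; since $\sigma(\cT)\subseteq\cT$, both shifted sequences lie in $\cT$, and writing them as $t(y)$, $t(y')$ we have $y=\pi(\sigma^{n-1}w)\in I_{w_n}$, $y'\in I_{w_n'}$ with $w_n<w_n'$, so $y<y'$ because $I_{w_n}$ lies entirely to the left of $I_{w_n'}$ (the $I_k$ are disjoint and ordered by the increasing breakpoints $c_1<\cdots<c_m$); hence $\pi(w)<\pi(w')$. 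The commutative diagram records~(1)--(4), and the ``in particular'' assertion is immediate since $G,G^*,L,L^*$ are $T^{I_0,\cdots,I_m}_{\beta_0,\cdots,\beta_m}$ for the particular tuples $(I_0,\cdots,I_m)$ exhibited in Definition~\ref{tran and expan}. I expect the surjectivity argument to be the main obstacle: the inequality $T_{k-1}(c_k)>T_k(c_k)$ is the sole place where the hypothesis $(\beta_0,\cdots,\beta_m)\in D_m$ (through $b_{k-1}<b_k$) is really used, and gluing the images $T(I_k)$ into one interval calls for some care with the half-open endpoints allowed in the definition of $\cI_{\beta_0,\cdots,\beta_m}$.
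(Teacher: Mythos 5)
Your proof is correct and follows essentially the same route as the paper's: $t\circ T=\sigma\circ t$ straight from the definition, part (3) from Proposition~\ref{expansions}, $\sigma(\cT)\subseteq\cT$ from the intertwining, and monotonicity of $\pi|_\cT$ by cancelling the common prefix at the first disagreement and comparing the two tails via the ordering of the intervals $I_k$ (the paper phrases this as $T^nx\in I_{w_{n+1}}$ versus $T^ny\in I_{v_{n+1}}$, which is the same comparison). The one substantive difference is the surjectivity of $T$: the paper dismisses $T([0,\frac{m}{\beta_m-1}])=[0,\frac{m}{\beta_m-1}]$ as following ``from the definition of $T$'' and then uses it silently in the inclusion $\cT\subseteq\sigma(\cT)$, whereas you supply the actual argument --- the overlap inequality $T_{k-1}(c_k)>T_k(c_k)$, reduced via $c_k\le b_{k-1}$ to $\beta_k(b_k-b_{k-1})>0$ --- and correctly identify this as the place where the hypothesis $(\beta_0,\cdots,\beta_m)\in D_m$ genuinely enters; this is a welcome addition rather than a deviation. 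Your derivation of $\pi\circ\sigma=T\circ\pi$ from the already-established identities $w=t(\pi(w))$ and $\sigma\circ t=t\circ T$, instead of the paper's direct computation of both sides as $\sum_{i\ge2}w_i/(\beta_{w_2}\cdots\beta_{w_i})$, is an equally valid and slightly cleaner bookkeeping choice.
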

\begin{proof} (1) \textcircled{\footnotesize{1}} Let $w\in\cT$. We need to prove $\pi\circ\sigma(w)=T\circ\pi(w)$. In fact, there exists $x\in[0,\frac{m}{\beta_m-1}]$ such that $w=t(x)$, and then $\pi(w)=x$ by Proposition \ref{expansions}. On the one hand,
$$\pi\circ\sigma(w)=\pi(w_2w_3\cdots)=\sum_{i=2}^\infty\frac{w_i}{\beta_{w_2}\cdots\beta_{w_i}}.$$
On the other hand,
$$T\circ\pi(w)=Tx\overset{(\star)}{=}T_{w_1}x=\beta_{w_1}x-w_1=\beta_{w_1}\sum_{i=1}^\infty\frac{w_i}{\beta_{w_1}\cdots\beta_{w_i}}-w_1=\sum_{i=2}^\infty\frac{w_i}{\beta_{w_2}\cdots\beta_{w_i}},$$
where $(\star)$ follows from the fact that $t_1(x)=w_1$ implies $x\in I_{w_1}$.
\newline\textcircled{\footnotesize{2}} Let $x\in[0,\frac{m}{\beta_m-1}]$. We need to prove $t\circ T(x)=\sigma\circ t(x)$. In fact, it follows immediately from the definition of $t$ that $t_n(Tx)=t_1(T^{n-1}(Tx))=t_1(T^nx)=t_{n+1}(x)$ for all $n\in\N$.
\newline(2) $T([0,\frac{m}{\beta_m-1}])=[0,\frac{m}{\beta_m-1}]$ follows from the definition of $T$. We prove $\sigma(\cT)=\cT$ as follows.
\newline$\boxed{\subset}$ Let $w\in\cT$. Then there exists $x\in[0,\frac{m}{\beta_m-1}]$ such that $w=t(x)$. Thus $\sigma w=\sigma\circ t(x)\xlongequal[]{\text{by (1)}}t\circ T(x)\in\cT$.
\newline$\boxed{\supset}$ Let $w\in\cT$. Then there exists $y\in[0,\frac{m}{\beta_m-1}]$ such that $w=t(y)$ and there exists $x\in[0,\frac{m}{\beta_m-1}]$ such that $y=Tx$. It follows from $w=t(y)=t(Tx)\xlongequal[]{\text{by (1)}}\sigma(t(x))$ and $t(x)\in\cT$ that $w\in\sigma(\cT)$.
\newline(3) \textcircled{\footnotesize{1}} For any $w\in\cT$, there exists $x\in[0,\frac{m}{\beta_m-1}]$ such that $w=t(x)$ and $\pi(w)=x$, which implies $t\circ\pi(w)=t(x)=w$.
\newline\textcircled{\footnotesize{2}} For any $x\in[0,\frac{m}{\beta_m-1}]$, $\pi(t(x))=x$ follows from Proposition \ref{expansions}.
\newline(4) By (3), it suffices to prove that $\pi|_\cT$ is increasing.
\newline Let $w,v\in\cT$ such that $w\prec v$. Then there exists $n\ge0$ such that $w_1\cdots w_n=v_1\cdots v_n$ and $w_{n+1}<v_{n+1}$. Let $x,y\in[0,\frac{m}{\beta_m-1}]$ such that $w=t(x)$ and $v=t(y)$. We need to prove $x<y$. In fact, by Lemma \ref{iteration} we get
\begin{eqnarray}\label{we get}
x=\pi(w_1\cdots w_n)+\frac{T^nx}{\beta_{w_1}\cdots\beta_{w_n}}\quad\text{and}\quad y=\pi(v_1\cdots v_n)+\frac{T^ny}{\beta_{v_1}\cdots\beta_{v_n}}.
\end{eqnarray}
Since $t_{n+1}(x)=w_{n+1}$ and $t_{n+1}(y)=v_{n+1}$ imply $T^nx\in I_{w_{n+1}}$ and $T^ny\in I_{v_{n+1}}$, by $w_{n+1}<v_{n+1}$ we get $T^nx<T^ny$. It follows from (\ref{we get}) and $w_1\cdots w_n=v_1\cdots v_n$ that $x<y$.
\end{proof}

The following is a generalization of \cite[Proposition 3.4]{BK07}.

\begin{proposition}[Relations between greedy/lazy and quasi-greedy/quasi-lazy expansions]\label{quasi relations} Let $m\in\N$, $(\beta_0,\cdots,\beta_m)\in D_m$ and $x\in[0,\frac{m}{\beta_m-1}]$.
\newline(1) Suppose $x\neq0$.
\begin{itemize}
\item[\textcircled{\footnotesize{1}}] $g(x)$ does not end with $0^\infty$ if and only if $g^*(x)=g(x)$.
\item[\textcircled{\footnotesize{2}}] If $g(x)$ ends with $0^\infty$, then
$$\begin{aligned}
g^*(x)&=g_1(x)\cdots g_{n-1}(x)g^*(a_{g_n(x)})\\
&=g_1(x)\cdots g_{n-1}(x)(g_n(x)-1)g^*(T_{g_n(x)-1}(a_{g_n(x)}))
\end{aligned}$$
where $n$ is the greatest integer such that $g_n(x)>0$.
\end{itemize}
(2) Suppose $x\neq\frac{m}{\beta_m-1}$.
\begin{itemize}
\item[\textcircled{\footnotesize{1}}] $l(x)$ does not end with $m^\infty$ if and only if $l^*(x)=l(x)$.
\item[\textcircled{\footnotesize{2}}] If $l(x)$ ends with $m^\infty$, then
$$\begin{aligned}
l^*(x)&=l_1(x)\cdots l_{n-1}(x)l^*(b_{l_n(x)})\\
&=l_1(x)\cdots l_{n-1}(x)(l_n(x)+1)l^*(T_{l_n(x)+1}(b_{l_n(x)}))
\end{aligned}$$
where $n$ is the greatest integer such that $l_n(x)<m$.
\end{itemize}
\end{proposition}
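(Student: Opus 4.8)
The plan is to obtain part (1)\textcircled{\footnotesize{1}} directly from the basic criteria of Proposition \ref{iff}, to prove part (1)\textcircled{\footnotesize{2}} by comparing the greedy and quasi-greedy orbits of $x$ up to the last nonzero greedy digit, and to treat part (2) by the symmetric argument. For (1)\textcircled{\footnotesize{1}}: if $g^*(x)=g(x)$, then since the quasi-greedy expansion of $x\neq 0$ never ends with $0^\infty$ (Proposition \ref{iff}(2)), neither does $g(x)$. Conversely, if $g(x)$ does not end with $0^\infty$, I would note that $g(x)$ is a $(\beta_0,\cdots,\beta_m)$-expansion of $x$ (Proposition \ref{expansions}) which, by Proposition \ref{iff}(1), satisfies $\pi(g_ng_{n+1}\cdots)<a_{g_n+1}$ and hence $\pi(g_ng_{n+1}\cdots)\le a_{g_n+1}$ whenever $g_n<m$; since moreover $x\neq 0$, Proposition \ref{iff}(2) then identifies $g(x)$ as the quasi-greedy expansion, i.e., $g^*(x)=g(x)$.

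For (1)\textcircled{\footnotesize{2}}, assume $g(x)$ ends with $0^\infty$. As $x\neq 0$ we have $g(x)\neq 0^\infty$, so $n:=\max\{i:g_i(x)>0\}$ is a well-defined positive integer; write $g_i:=g_i(x)$. The heart of the argument is the claim that the greedy and quasi-greedy transformations act identically along the first $n-1$ steps of the orbit of $x$, so that $(G^*)^{i-1}x=G^{i-1}x$ and $g^*_i(x)=g_i$ for $1\le i\le n-1$. I would prove this by induction on $i$ from two observations: the partitions defining $G$ and $G^*$ differ only at the points $a_1,\cdots,a_m$; and if $G^{i-1}x=a_k$ for some $k\in\{1,\cdots,m\}$, then $G^ix=T_k(a_k)=\beta_k a_k-k=0$, which forces $g_j=0$ for every $j>i$ and hence $n\le i$. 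Consequently, for $i\le n-1$ the orbit point $G^{i-1}x$ stays away from $\{a_1,\cdots,a_m\}$, where $G$ and $G^*$ coincide. It then follows that $(G^*)^{n-1}x=G^{n-1}x$, which by Proposition \ref{expansions} equals $\pi(g_ng_{n+1}\cdots)=\pi(g_n0^\infty)=\frac{g_n}{\beta_{g_n}}=a_{g_n}$; reading the quasi-greedy expansion of $x$ from position $n$ onwards as that of $(G^*)^{n-1}x$ (by the commutativity in Proposition \ref{commutative}) then yields $g^*(x)=g_1\cdots g_{n-1}\,g^*(a_{g_n})$.

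For the second displayed identity I would locate $a_{g_n}$ in the quasi-greedy partition. Since $1\le g_n\le m$ and $a_{g_n-1}<a_{g_n}$ (because $(\beta_0,\cdots,\beta_m)\in D_m$), the point $a_{g_n}$ lies in the quasi-greedy interval carrying the digit $g_n-1$ --- namely $(0,a_1]\subseteq[0,a_1]$ when $g_n=1$, and $(a_{g_n-1},a_{g_n}]$ when $g_n\ge 2$ --- so $g^*_1(a_{g_n})=g_n-1$ and $G^*(a_{g_n})=T_{g_n-1}(a_{g_n})$, whence $g^*(a_{g_n})=(g_n-1)\,g^*(T_{g_n-1}(a_{g_n}))$. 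I expect the main obstacle to be the inductive claim that the greedy and quasi-greedy orbits agree up to step $n-1$, since that is exactly where the maximality of $n$ must be used; the rest is routine bookkeeping with the defining partitions. Part (2) would be handled by the mirror-image argument, using parts (3) and (4) of Proposition \ref{iff} in place of (1) and (2), replacing each left endpoint $a_k$ by the right endpoint $b_k$ and the digit $0$ by $m$, and using $T_k(b_k)=\frac{m}{\beta_m-1}$ in place of $T_k(a_k)=0$.
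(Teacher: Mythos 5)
Your proposal is correct and follows essentially the same route as the paper: both rest on the lexicographic criteria of Proposition \ref{iff} for part (1)\textcircled{\footnotesize{1}}, and on showing that the greedy and quasi-greedy orbits of $x$ agree up to step $n-1$ (using that a visit to some $a_k$ with $k\ge 1$ sends the greedy orbit to $0$, contradicting the maximality of $n$) together with the commutativity of Proposition \ref{commutative} for part (1)\textcircled{\footnotesize{2}}. The only differences are presentational (induction in place of a first-disagreement contradiction, and a direct application of the criterion rather than an orbit argument for the converse in \textcircled{\footnotesize{1}}), so no further comment is needed.
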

\begin{proof} (1) \textcircled{\footnotesize{1}} $\boxed{\Leftarrow}$ follows from Proposition \ref{iff} (2).
\newline$\boxed{\Rightarrow}$ (By contradiction) Assume $(g_i)_{i\ge1}\neq(g^*_i)_{i\ge1}$. Then there exists $n\in\N$ such that $g_1\cdots g_{n-1}=g^*_1\cdots g^*_{n-1}$ and $g_n\neq g^*_n$. Recall the definitions of $g,g^*,G$ and $G^*$. By $x\neq0$ and $g_1=g^*_1$, we get $x\notin\{a_0,\cdots,a_m\}$, and then $Gx=G^*x\neq0$. By $g_2=g^*_2$, we get $Gx=G^*x\notin\{a_0,\cdots,a_m\}$, and then $G^2x=(G^*)^2x\neq0$.$\cdots$ By repeating the above process, we get $G^{n-1}x=(G^*)^{n-1}x\neq0$. It follows from
$$G^{n-1}x\in\left\{\begin{array}{ll}
[a_{g_n},a_{g_n+1}) & \text{if }0\le g_n\le m-1,\\
\normalsize[a_m,\frac{m}{\beta_m-1}\normalsize] & \text{if }g_n=m,
\end{array}\right.$$
and $g_n\neq g^*_n$ that $G^{n-1}x=a_{g_n}$ This implies $G^nx=0$, and then for all $i\ge n$, $G^ix=0$. Thus $g_{n+1}g_{n+2}\cdots=0^\infty$, which contradicts that $(g_i)_{i\ge1}$ does not end with $0^\infty$.
\newline\textcircled{\footnotesize{2}} Suppose that $g(x)$ ends with $0^\infty$ and $n$ is the greatest integer such that $g_n>0$. We need to consider the following i), ii) and iii).
\begin{itemize}
\item[i)] Prove $g^*_1\cdots g^*_{n-1}=g_1\cdots g_{n-1}$.
\newline(By contradiction) Assume $g^*_1\cdots g^*_{n-1}\neq g_1\cdots g_{n-1}$. Then there exists $k\in\{1,\cdots,n-1\}$ such that $g^*_1\cdots g^*_{k-1}=g_1\cdots g_{k-1}$ but $g^*_k\neq g_k$. By Lemma \ref{iteration} we get $(G^*)^{k-1}x=G^{k-1}x$. Since $g^*_k\neq g_k$, there must exist $j\in\{1,\cdots,m\}$ such that $G^{k-1}x=a_j$. This implies $G^kx=0$, and then for all $i\ge k$ we have $G^ix=0$. Thus $g_{k+1}g_{k+2}\cdots=0^\infty$, which contradicts $g_n>0$.
\item[ii)] Prove $g^*_ng^*_{n+1}\cdots=g^*(a_{g_n})$. In fact, we have
$$\sigma^{n-1}(g^*(x))\overset{(\star)}{=}g^*((G^*)^{n-1}x)\overset{(\star\star)}{=}g^*(a_{g_n}),$$
where $(\star)$ follows from Proposition \ref{commutative} (1), and $(\star\star)$ follows from $(G^*)^{n-1}x=a_{g_n}$, which is a consequence of i), Lemma \ref{iteration} and
$$x=\pi(g_1\cdots g_n)=\pi(g_1\cdots g_{n-1})+\frac{a_{g_n}}{\beta_{g_1}\cdots\beta_{g_{n-1}}}.$$
\item[iii)] Prove $g^*(a_{g_n})=(g_n-1)g^*(T_{g_n-1}(a_{g_n}))$.
\newline In fact, on the one hand, $g^*_1(a_{g_n})=g_n-1$ follows directly from the definition of $g^*_1$. On the other hand, we have
$$\sigma(g^*(a_{g_n}))\overset{(\star)}{=}g^*(G^*(a_{g_n}))\overset{(\star\star)}{=}g^*(T_{g_n-1}(a_{g_n})),$$
where $(\star)$ follows from Proposition \ref{commutative} (1), and $(\star\star)$ follows from $g_n>0$ and the definition of $G^*$.
\end{itemize}
(2) follows in a way similar to (1).
\end{proof}

In the proof of our main results, we need the following.

\begin{proposition}[Interactive increase]\label{supincrease}
Let $m\in\N$, $(\beta_0,\cdots,\beta_m)\in D_m$ and $x,y\in[0,\frac{m}{\beta_m-1}]$.
\begin{itemize}
\item[(1)] Let $(I_0,\cdots,I_m),(I_0',\cdots,I_m')\in\cI_{\beta_0,\cdots,\beta_m}$ such that for all $k\in\{0,\cdots,m\}$, the intervals $I_k$ and $I_k'$ are at most different at the end points (i.e., they have the same closure), $t(x)$ be the $(I_0,\cdots,I_m)$-$(\beta_0,\cdots,\beta_m)$-expansion of $x$ and $t'(y)$ be the $(I_0',\cdots,I_m')$-$(\beta_0,\cdots,\beta_m)$-expansion of $y$. If $x<y$, then $t(x)\prec t'(y)$.
\item[(2)] In particular, if $x<y$, we have $g(x)\prec g^*(y)$ and $l^*(x)\prec l(y)$.
\end{itemize}
\end{proposition}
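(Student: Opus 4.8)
The plan is to prove (1) by running the two symbolic algorithms $t(\cdot)$ and $t'(\cdot)$ side by side and carrying the strict inequality $x<y$ forward through the iterated affine branches $T_k$, and then to deduce (2) by recognising the greedy/quasi-greedy pair and the quasi-lazy/lazy pair as partitions in $\cI_{\beta_0,\cdots,\beta_m}$ that share the same break points.

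For (1) I would write $T$ and $T'$ for the transformations $T_{\beta_0,\cdots,\beta_m}^{I_0,\cdots,I_m}$ and $T_{\beta_0,\cdots,\beta_m}^{I_0',\cdots,I_m'}$, let $c_1<\cdots<c_m$ be their common break points, and set $c_0:=0$, $c_{m+1}:=\frac{m}{\beta_m-1}$, so that $\overline{I_k}=\overline{I_k'}=[c_k,c_{k+1}]$ for each $k\in\{0,\cdots,m\}$. The core step is the claim, proved by induction on $n\ge0$: \emph{if $t_i(x)=t_i'(y)$ for all $1\le i\le n$, then $T^nx<(T')^ny$.} The base case $n=0$ is the hypothesis; for the inductive step, if moreover $t_{n+1}(x)=t_{n+1}'(y)=:k$, then $T^nx\in I_k$ and $(T')^ny\in I_k'$, so $T^{n+1}x=\beta_kT^nx-k<\beta_k(T')^ny-k=(T')^{n+1}y$ because $\beta_k>1$. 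Now, by Proposition \ref{expansions}, $\pi(t(x))=x<y=\pi(t'(y))$, hence $t(x)\neq t'(y)$; let $n+1$ be the first coordinate where they disagree, and put $k:=t_{n+1}(x)$, $j:=t_{n+1}'(y)$ with $k\neq j$. The claim gives $T^nx<(T')^ny$, while $T^nx\in I_k\subseteq[c_k,c_{k+1}]$ and $(T')^ny\in I_j'\subseteq[c_j,c_{j+1}]$. If $k>j$ then $k\ge j+1$, so $c_k\ge c_{j+1}$, forcing $T^nx\ge c_k\ge c_{j+1}\ge(T')^ny$, a contradiction; therefore $k<j$, i.e.\ $t_{n+1}(x)<t_{n+1}'(y)$, which together with agreement on the first $n$ coordinates yields $t(x)\prec t'(y)$.

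For (2), both the greedy and the quasi-greedy transformations are $(I_0,\cdots,I_m)$-$(\beta_0,\cdots,\beta_m)$-transformations whose $k$-th branch domain has closure $[a_k,a_{k+1}]$ for $1\le k\le m-1$, closure $[0,a_1]$ for $k=0$, and closure $[a_m,b_m]$ for $k=m$ — the greedy branches being half-open on the right, the quasi-greedy ones half-open on the left — so they share the break points $a_1<\cdots<a_m$, which lie in the ranges required by $\cI_{\beta_0,\cdots,\beta_m}$ because $(\beta_0,\cdots,\beta_m)\in D_m$ gives $a_k\le b_{k-1}$. Applying (1) with $x$ read by the greedy partition and $y$ by the quasi-greedy one gives $g(x)\prec g^*(y)$. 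Symmetrically, the quasi-lazy and lazy transformations have common break points $b_0<\cdots<b_{m-1}$ and branch closures $[b_{k-1},b_k]$ (with the obvious end modifications), so (1) gives $l^*(x)\prec l(y)$.

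The one point I expect to require care is the last step of (1): excluding the degenerate configuration in which $T^nx$ sits at the right endpoint of $\overline{I_k}$ while $(T')^ny$ sits at the left endpoint of $\overline{I_j'}$ with $k>j$. This is precisely what the chain $T^nx\ge c_k\ge c_{j+1}\ge(T')^ny$ rules out, using the \emph{strict} inequality $T^nx<(T')^ny$ supplied by the claim — so I do not anticipate a genuine obstacle, only the discipline of keeping every inequality strict and tracking which endpoints belong to which $I_k$.
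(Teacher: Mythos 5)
Your proposal is correct and follows essentially the same route as the paper: identify the first index of disagreement, establish $T^nx<(T')^ny$ (the paper gets this from Lemma \ref{iteration} applied to the common prefix, you re-derive it by directly iterating the increasing branches $T_k$ --- an equivalent computation), and rule out $t_{n+1}(x)>t_{n+1}'(y)$ via the shared break points, exactly as in the paper's chain $T^nx\ge\inf I_{t_{n+1}(x)}\ge\sup I'_{t'_{n+1}(y)}\ge(T')^ny$. Your explicit verification that the greedy/quasi-greedy and quasi-lazy/lazy partitions share closures is a detail the paper leaves implicit, and it is accurate.
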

\begin{proof} We only need to prove (1). Suppose $0\le x<y\le\frac{m}{\beta_m-1}$. Since $t(x)=t'(y)$ will imply $x=\pi(t(x))=\pi(t'(y))=y$ which contradicts $x<y$, we must have $t(x)\neq t'(y)$. Thus there exists $n\ge0$ such that $t_1(x)\cdots t_n(x)=t'_1(y)\cdots t'_n(y)$ and $t_{n+1}(x)\neq t'_{n+1}(y)$. It suffices to prove $t_{n+1}(x)<t'_{n+1}(y)$ by contradiction.

In fact, by $x<y$ and Lemma \ref{iteration}, we get $T^nx<(T')^ny$, where $T$ is the $(I_0,\cdots,I_m)$-$(\beta_0,\cdots,\beta_m)$-transformation and $T'$ is the $(I'_0,\cdots,I'_m)$-$(\beta_0,\cdots,\beta_m)$-transformation. If $t_{n+1}(x)>t'_{n+1}(y)$, by $T^nx\in I_{t_{n+1}(x)}$ and $(T')^ny\in I'_{t'_{n+1}(y)}$ we get
$$T^nx\ge\inf I_{t_{n+1}(x)}\ge\sup I'_{t'_{n+1}(y)}\ge(T')^ny,$$
which contradicts $T^nx<(T')^ny$.
\end{proof}

Given $x\in[0,\frac{m}{\beta_m-1}]$, let
$$\Sigma_{\beta_0,\cdots,\beta_m}(x):=\Big\{(w_i)_{i\ge1}\in\{0,\cdots,m\}^\N:(w_i)_{i\ge1}\text{ is a }(\beta_0,\cdots,\beta_m)\text{-expansion of }x\Big\}$$
and
$$\Omega_{\beta_0,\cdots,\beta_m}(x):=\Big\{(S_i)_{i\ge1}\in\{T_0,\cdots,T_m\}^\N:(S_n\circ\cdots\circ S_1)(x)\in\Big[0,\frac{m}{\beta_m-1}\Big]\text{ for all }n\in\N\Big\}.$$

As a generalization of \cite[Lemma 3.1]{B20} and \cite[Lemma 2.1]{BK19} (see also \cite{B14}), the following is a dynamical interpretation of $(\beta_0,\cdots,\beta_m)$-expansions.

\begin{proposition}[Dynamical interpretation]\label{dynamical interpretation}
Let $m\in\N$ and $(\beta_0,\cdots,\beta_m)\in D_m$. For all $x\in[0,\frac{m}{\beta_m-1}]$, the map which sends $(w_i)_{i\ge1}$ to $(T_{w_i})_{i\ge1}$ is a bijection from $\Sigma_{\beta_0,\cdots,\beta_m}(x)$ to $\Omega_{\beta_0,\cdots,\beta_m}(x)$.
\end{proposition}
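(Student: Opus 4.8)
The plan is to verify that the map $\Phi\colon(w_i)_{i\ge1}\mapsto(T_{w_i})_{i\ge1}$ is well defined, injective, and surjective onto $\Omega_{\beta_0,\cdots,\beta_m}(x)$. Well-definedness is immediate, since $w_i\in\{0,\cdots,m\}$ forces $T_{w_i}\in\{T_0,\cdots,T_m\}$. The key preliminary observation is that $k\mapsto T_k$ is injective on $\{0,\cdots,m\}$: if $k\neq k'$, the affine function $T_k-T_{k'}$ takes the value $k'-k\neq0$ at $0$, so $T_k\neq T_{k'}$. Hence $\Phi$ is injective on $\{0,\cdots,m\}^\N$, and moreover every $(S_i)_{i\ge1}\in\{T_0,\cdots,T_m\}^\N$ equals $(T_{w_i})_{i\ge1}$ for a \emph{unique} $w\in\{0,\cdots,m\}^\N$; this is what allows surjectivity to even be formulated.

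The second ingredient I would record is the purely algebraic telescoping identity: for every $w\in\{0,\cdots,m\}^\N$, every $x\in\R$ and every $n\in\N$,
$$x=\pi(w_1\cdots w_n)+\frac{(T_{w_n}\circ\cdots\circ T_{w_1})(x)}{\beta_{w_1}\cdots\beta_{w_n}},$$
which follows by the same (indeed simpler) induction as in the proof of Lemma \ref{iteration}, resting only on $y=\tfrac{k}{\beta_k}+\tfrac{T_ky}{\beta_k}$, and which requires no admissibility of $w$. From this I would first get $\Phi(\Sigma_{\beta_0,\cdots,\beta_m}(x))\subseteq\Omega_{\beta_0,\cdots,\beta_m}(x)$: if $w\in\Sigma_{\beta_0,\cdots,\beta_m}(x)$, so that $x=\pi(w)$, then applying $T_{w_1},\cdots,T_{w_n}$ to the series defining $x$ — exactly as in the proof of Proposition \ref{m-b expansion} — gives $(T_{w_n}\circ\cdots\circ T_{w_1})(x)=\pi(w_{n+1}w_{n+2}\cdots)$, which is $\ge0$ and is $\le\frac{m}{\beta_m-1}$ by Proposition \ref{m-b expansion} (the sequence $w_{n+1}w_{n+2}\cdots$ being a $(\beta_0,\cdots,\beta_m)$-expansion of it). Hence $(T_{w_i})_{i\ge1}\in\Omega_{\beta_0,\cdots,\beta_m}(x)$.

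For surjectivity I would take $(S_i)_{i\ge1}\in\Omega_{\beta_0,\cdots,\beta_m}(x)$, write $S_i=T_{w_i}$ with $w\in\{0,\cdots,m\}^\N$ uniquely determined as above, and use that $(T_{w_n}\circ\cdots\circ T_{w_1})(x)\in[0,\frac{m}{\beta_m-1}]$ for every $n$ by definition of $\Omega_{\beta_0,\cdots,\beta_m}(x)$. Then the remainder term in the telescoping identity lies between $0$ and $\frac{m/(\beta_m-1)}{(\min\{\beta_0,\cdots,\beta_m\})^n}$, which tends to $0$ since every $\beta_k>1$; letting $n\to\infty$ yields $x=\lim_{n\to\infty}\pi(w_1\cdots w_n)=\pi(w)$, i.e. $w\in\Sigma_{\beta_0,\cdots,\beta_m}(x)$ with $\Phi(w)=(S_i)_{i\ge1}$. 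Together with injectivity this makes $\Phi$ a bijection.

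I do not expect a serious obstacle; the only step requiring a little care is surjectivity, where the mere \emph{boundedness} of the orbit $\big((T_{w_n}\circ\cdots\circ T_{w_1})(x)\big)_{n\ge1}$ — the defining condition of $\Omega_{\beta_0,\cdots,\beta_m}(x)$ — has to be upgraded to the \emph{convergence} $x=\pi(w)$, and this is exactly where the blow-up $\beta_{w_1}\cdots\beta_{w_n}\to\infty$ does the work.
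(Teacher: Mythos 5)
Your proposal is correct and follows essentially the same route as the paper: well-definedness via the identity $(T_{w_n}\circ\cdots\circ T_{w_1})(x)=\pi(w_{n+1}w_{n+2}\cdots)$ together with Proposition \ref{m-b expansion}, and surjectivity by unwinding the telescoping identity and letting the factor $(\min\{\beta_0,\cdots,\beta_m\})^{-n}$ kill the remainder. Your explicit remark that $k\mapsto T_k$ is injective just makes precise what the paper dismisses as ``obviously injective.''
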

\begin{proof} (1) Prove that the mentioned map is well-defined.
\newline Let $(w_i)_{i\ge1}\in\{0,\cdots,m\}^\N$ be a $(\beta_0,\cdots,\beta_m)$-expansion of $x$ and $n\in\N$. It suffices to prove $T_{w_n}\circ\cdots\circ T_{w_1}x\in[0,\frac{m}{\beta_m-1}]$. In fact, by a simple calculation as in (3) in the proof of Proposition \ref{m-b expansion}, we get
$$T_{w_n}\circ\cdots\circ T_{w_1}x=\sum_{i=n+1}^\infty\frac{w_i}{\beta_{w_{n+1}}\cdots\beta_{w_i}}.$$
Thus
$$T_{w_n}\circ\cdots\circ T_{w_1}x=\sum_{i=1}^\infty\frac{w_{n+i}}{\beta_{w_{n+1}}\cdots\beta_{w_{n+i}}}=\pi(w_{n+1}w_{n+2}\cdots)\in[0,\frac{m}{\beta_m-1}]$$
by Proposition \ref{m-b expansion}.
\newline(2) The mentioned map is obviously injective. We prove that it is surjective as follows.
\newline Let $(w_i)_{i\ge1}\in\{0,\cdots,m\}^\N$ such that $T_{w_n}\circ\cdots\circ T_{w_1}x\in[0,\frac{m}{\beta_m-1}]$ for all $n\in\N$. By
$$0\le T_{w_n}\circ\cdots\circ T_{w_1}x\le\frac{m}{\beta_m-1},$$
we get
$$\frac{w_n}{\beta_{w_n}}\le T_{w_{n-1}}\circ\cdots\circ T_{w_1}x\le\frac{w_n}{\beta_{w_n}}+\frac{m}{\beta_{w_n}(\beta_m-1)},$$
$$\frac{w_{n-1}}{\beta_{w_{n-1}}}+\frac{w_n}{\beta_{w_{n-1}}\beta_{w_n}}\le T_{w_{n-2}}\circ\cdots\circ T_{w_1}x\le\frac{w_{n-1}}{\beta_{w_{n-1}}}+\frac{w_n}{\beta_{w_{n-1}}\beta_{w_n}}+\frac{m}{\beta_{w_{n-1}}\beta_{w_n}(\beta_m-1)},$$
$$\cdots,$$
$$\frac{w_1}{\beta_{w_1}}+\frac{w_2}{\beta_{w_1}\beta_{w_2}}+\cdots+\frac{w_n}{\beta_{w_1}\cdots\beta_{w_n}}\le x\le\frac{w_1}{\beta_{w_1}}+\frac{w_2}{\beta_{w_1}\beta_{w_2}}+\cdots+\frac{w_n}{\beta_{w_1}\cdots\beta_{w_n}}+\frac{m}{\beta_{w_1}\cdots\beta_{w_n}(\beta_m-1)},$$
which implies
$$\pi(w_1\cdots w_n)\le x\le\pi(w_1\cdots w_n)+\frac{m}{(\beta_m-1)(\min\{\beta_0,\cdots,\beta_m\})^n}$$
for all $n\in\N$. Let $n\to\infty$, we get $x=\pi(w_1w_2\cdots)$. Thus $(w_i)_{i\ge1}\in\Sigma_{\beta_0,\cdots,\beta_m}(x)$.
\end{proof}

The following proposition on expansions in one base, which will be used in the proof of Corollary \ref{cor3}, implies that $w$ is lazy if and only if $\overline{w}$ is greedy (recall Definition \ref{admissibility}) for all $w=(w_i)_{i\ge1}\in\{0,\cdots,m\}^\N$, where $\overline{w}:=(\overline{w_i})_{i\ge1}$ and $\overline{k}:=m-k$ for all $k\in\{0,\cdots,m\}$. By Proposition \ref{max-min} (1), we recover \cite[Theorem 2.1]{DK93} and \cite[Lemma 1]{K99}.

\begin{proposition}[Reflection principle in one base]\label{symmetric} Let $m\in\N$ and $\beta\in(1,m+1]$. For all $x\in[0,\frac{m}{\beta-1}]$, we have
$$l\Big(\frac{m}{\beta-1}-x\Big)=\overline{g(x)}\quad\text{and}\quad l^*\Big(\frac{m}{\beta-1}-x\Big)=\overline{g^*(x)}.$$
\end{proposition}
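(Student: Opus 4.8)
The plan is to prove the two identities simultaneously by exploiting the conjugacy between the greedy transformation $G$ and the lazy transformation $L$ under the reflection $R(x) := \frac{m}{\beta-1} - x$. Since we are in the single-base case $\beta_0 = \cdots = \beta_m = \beta$, all the quantities simplify: $a_k = \frac{k}{\beta}$, $b_k = \frac{k}{\beta} + \frac{m}{\beta(\beta-1)}$, and $T_k(x) = \beta x - k$. First I would record the elementary but crucial algebraic fact that $T_{\overline{k}} \circ R = R \circ T_k$ for every $k \in \{0,\dots,m\}$; indeed $T_{\overline{k}}(R(x)) = \beta(\frac{m}{\beta-1}-x) - (m-k) = \frac{m}{\beta-1} - (\beta x - k) = R(T_k(x))$, using $\frac{\beta m}{\beta-1} - m = \frac{m}{\beta-1}$. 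At the level of intervals, $R$ maps $[a_k, a_{k+1})$ onto $(\,\frac{m}{\beta-1} - a_{k+1},\, \frac{m}{\beta-1} - a_k\,]$, and one checks $\frac{m}{\beta-1} - a_{k+1} = b_{\overline{k}-1}$ and $\frac{m}{\beta-1} - a_k = b_{\overline{k}}$ — again a one-line computation from the definitions of $a$, $b$ and $\overline{k}$. Hence $R$ carries the greedy partition $\{[a_0,a_1), \dots, [a_m, \frac{m}{\beta-1}]\}$ onto the lazy partition $\{[0,b_0], (b_0,b_1], \dots\}$ with the index reversed by $k \mapsto \overline{k}$, and intertwines $G$ with $L$: that is, $L \circ R = R \circ G$ on $[0,\frac{m}{\beta-1}]$.

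Once this conjugacy is in place, the digit identity follows by induction on $n$. Concretely, if $G^{n-1}x \in [a_k, a_{k+1})$ (or $[a_m, \frac{m}{\beta-1}]$) so that $g_n(x) = k$, then by the intertwining $L^{n-1}(Rx) = R(G^{n-1}x)$ lies in the reflected interval, which by the interval computation above is exactly the lazy cell for digit $\overline{k}$, so $l_n(Rx) = \overline{k} = \overline{g_n(x)}$. Running this for all $n$ gives $l(Rx) = \overline{g(x)}$, which is the first identity. The quasi-greedy/quasi-lazy identity is handled in exactly the same fashion: one checks that $R$ intertwines $G^*$ with $L^*$ — here the relevant interval computation is that $R$ maps the half-open cells $[0,a_1]$, $(a_k, a_{k+1}]$, $(a_m, \frac{m}{\beta-1}]$ of the quasi-greedy partition onto the cells $[0,b_0)$, $[b_{k-1}, b_k)$, $[b_{m-1}, \frac{m}{\beta-1}]$ of the quasi-lazy partition, again with index reversal — and then the identical induction yields $l^*(Rx) = \overline{g^*(x)}$.

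I would present the proof by first isolating the two lemmas-within-the-proof: (i) the conjugacy $T_{\overline{k}} \circ R = R \circ T_k$ and the endpoint identities $\frac{m}{\beta-1} - a_k = b_{\overline{k}}$, $\frac{m}{\beta-1} - a_{k+1} = b_{\overline{k}-1}$; and (ii) the resulting statements $L \circ R = R \circ G$ and $L^* \circ R = R \circ G^*$ as maps of $[0,\frac{m}{\beta-1}]$. Then the digit-by-digit induction closes both identities in one stroke. The main obstacle — really the only place where care is needed — is the bookkeeping of open versus closed endpoints in the four partitions: one must verify that under $R$ an interval that is closed on the left (the greedy convention) becomes closed on the right (the lazy convention) and that the boundary points $a_k$ land precisely on the boundary points $b_{\overline{k}}$, with no cell being shifted by one. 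This is purely a matter of being careful with the definitions in Definition \ref{tran and expan}, but it is where a sign or an index slip would go unnoticed, so I would write that verification out explicitly rather than leaving it to the reader.
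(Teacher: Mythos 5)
Your proof is correct, but it takes a genuinely different route from the paper's. The paper deduces the reflection principle from its Proposition \ref{iff} (the basic criteria for greedy/lazy and quasi-greedy/quasi-lazy expansions in terms of projections of tails): writing $w=g(x)$, it combines the greedy inequality $\pi(w_nw_{n+1}\cdots)<a_{w_n+1}$ with the two one-line identities $\pi(v)+\pi(\overline{v})=\frac{m}{\beta-1}$ and $a_{k+1}+b_{\overline{k}-1}=\frac{m}{\beta-1}$ to obtain the lazy inequality for $\overline{w}$, then applies the criterion in the reverse direction; the quasi case is handled the same way with parts (2) and (4). You instead prove the stronger dynamical statement that the reflection $R(x)=\frac{m}{\beta-1}-x$ conjugates $G$ to $L$ (and $G^*$ to $L^*$) cell by cell, and read off the digit identity by induction. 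Both arguments are sound and of comparable length. Your version works directly from Definition \ref{tran and expan} and so handles the endpoints $x=0$ and $x=\frac{m}{\beta-1}$ without needing the caveats $x\neq 0$, $x\neq\frac{m}{\beta_m-1}$ attached to Proposition \ref{iff} (2) and (4), and it isolates the conjugacies $L\circ R=R\circ G$ and $L^*\circ R=R\circ G^*$ as reusable structural facts; the paper's version avoids the open/closed endpoint bookkeeping you rightly flag as the delicate point, because that bookkeeping is already encoded in the strict versus non-strict inequalities of Proposition \ref{iff}. Your computations $T_{\overline{k}}\circ R=R\circ T_k$ and $\frac{m}{\beta-1}-a_k=b_{\overline{k}}$ are correct, and the resulting matching of partition cells (greedy cell $k$ to lazy cell $\overline{k}$, quasi-greedy cell $k$ to quasi-lazy cell $\overline{k}$) checks out.
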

\begin{proof} (1) Prove $l(\frac{m}{\beta-1}-x)=\overline{g(x)}$. Let $w=g(x)$. By Proposition \ref{iff} (1) we get
$$\pi(w_nw_{n+1}\cdots)<a_{w_n+1}\quad\text{whenever }w_n<m.$$
It follows from $\pi(w_nw_{n+1}\cdots)+\pi(\overline{w}_n\overline{w}_{n+1}\cdots)=\frac{m}{\beta-1}$ and $a_{w_n+1}+b_{w_n-1}=\frac{m}{\beta-1}$ that
\begin{equation}\label{reflect}
\pi(\overline{w}_n\overline{w}_{n+1}\cdots)>b_{w_n-1}\quad\text{whenever }\overline{w}_n>0.
\end{equation}
Since $w=g(x)$ implies $\pi(\overline{w})=\frac{m}{\beta-1}-x$, by Proposition \ref{iff} (3) and (\ref{reflect}) we get $\overline{w}=l(\frac{m}{\beta-1}-x)$.
\newline(2) $l^*(\frac{m}{\beta-1}-x)=\overline{g^*(x)}$ follows in a way similar to (1) by applying Proposition \ref{iff} (2) and (4).
\end{proof}

\section{Proofs of the main results}

First we give the following lemma, which is essentially stronger than Theorem \ref{main} (1) \textcircled{\footnotesize{1}}, (2) \textcircled{\footnotesize{1}} and (3) \textcircled{\footnotesize{1}}.

\begin{lemma}\label{lemma}
Let $m\in\N$, $(\beta_0,\cdots,\beta_m)\in D_m$, $x\in[0,\frac{m}{\beta_m-1}]$ and $w\in\{0,\cdots,m\}^\N$ be a $(\beta_0,\cdots,\beta_m)$-expansion of $x$.
\begin{itemize}
\item[(1)] If $w$ is the greedy expansion and $w\neq m^\infty$, then
$$\sigma^nw\prec g^*(\xi_+)\quad\text{for all }n\ge p,$$
where $p:=\min\{i\ge0:G^ix<\xi_+\}$ exists, and $\xi_+:=\max_{0\le k\le m-1}T_k(a_{k+1})$.
\item[(2)] If $w$ is the lazy expansion and $w\neq0^\infty$, then
$$\sigma^nw\succ l^*(\eta_-)\quad\text{for all }n\ge q,$$
where $q:=\min\{i\ge0:L^ix>\eta_-\}$ exists, and $\eta_-:=\min_{1\le k\le m}T_k(b_{k-1})$.
\end{itemize}
\end{lemma}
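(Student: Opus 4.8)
The plan is to prove the two parts symmetrically, so I focus on part (1); part (2) follows by the reflection-type argument (or by a directly analogous proof with all inequalities reversed, using Proposition \ref{iff} (3)--(4) in place of (1)--(2)).

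First I would verify that $p:=\min\{i\ge 0:G^ix<\xi_+\}$ is well-defined. Since $w=g(x)\neq m^\infty$, by Lemma \ref{iff0} we have $x<\frac{m}{\beta_m-1}$, and in fact there is some smallest index $j$ with $g_j=w_j<m$, so $G^{j-1}x\in[a_{w_j},a_{w_j+1})$; I then need $a_{w_j+1}\le\xi_+$. This should come from the definition $\xi_+=\max_{0\le k\le m-1}T_k(a_{k+1})$ together with the hypothesis $(\beta_0,\dots,\beta_m)\in D_m$: writing $T_k(a_{k+1})=\beta_k a_{k+1}-k$ and using $a_{k+1}\le b_k$, one checks $T_k(a_{k+1})\ge$ (something that dominates all the $a_{k+1}$'s). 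Actually the cleanest route is to note $G^{j-1}x<a_{w_j+1}\le\frac{m}{\beta_m-1}$ and argue that each $T_k(a_{k+1})$ with the relevant sign gives a value $\ge a_{k+1}$ when $G^{j-1}x$ lands in $[a_k,a_{k+1})$; I would instead directly show $G^{j-1}x<\xi_+$ by showing $a_{w_j+1}\le\xi_+$, i.e.\ that $\xi_+$ dominates every $a_{k+1}$, $0\le k\le m-1$. This is where I expect to spend a little care: it amounts to the inequality $\max_k(\beta_k a_{k+1}-k)\ge a_{\ell+1}$ for every $\ell$, which I would extract from the chain $a_k<a_{k+1}\le b_k<b_{k+1}$ defining $D_m$.

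Next, the core step. Fix $n\ge p$; I want $\sigma^n w\prec g^*(\xi_+)$. By Proposition \ref{expansions}, $\sigma^n w=(w_{n+1}w_{n+2}\cdots)$ is the greedy $(\beta_0,\dots,\beta_m)$-expansion of $G^n x$, and by definition of $p$ we have $G^n x<\xi_+$ (using that once $G^ix<\xi_+$, greediness keeps $G^{i+1}x$ in range — more precisely, I should be mildly careful: I only know $G^px<\xi_+$, and I need $G^nx<\xi_+$ for all $n\ge p$; this should follow because $\xi_+\le a_{k+1}$ fails in general, so instead I use the definition of $p$ as the \emph{minimum} and a separate monotonicity observation, or simply re-read: actually the statement only asserts the conclusion for $n\ge p$, and $G^nx<\xi_+$ need not hold for all such $n$ — so the real argument must be that $G^nx<\xi_+$ OR $\sigma^n w$ still falls below $g^*(\xi_+)$). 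The clean way: for $n=p$ we have $G^px<\xi_+$, hence by Proposition \ref{supincrease} (2), $g(G^px)\prec g^*(\xi_+)$, i.e.\ $\sigma^p w\prec g^*(\xi_+)$. For $n>p$: since $g^*(\xi_+)$ is the quasi-greedy expansion of a point, $\cG^*$ is shift-invariant-ish — more precisely I would argue that $\sigma^p w\prec g^*(\xi_+)$ and $g^*(\xi_+)$ has the property (from Proposition \ref{iff} (2), applied to the quasi-greedy sequence) that every shift of it is $\preceq g^*(\xi_+)$; combining $\sigma^n w=\sigma^{n-p}(\sigma^p w)$ with the lexicographic fact that shifting a sequence strictly below $g^*(\xi_+)$ keeps it strictly below (because $g^*(\xi_+)$ dominates all its own shifts and a strict first-difference propagates), gives $\sigma^n w\prec g^*(\xi_+)$ for all $n\ge p$.

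The main obstacle I anticipate is precisely this last propagation step: showing that if $v\prec g^*(\xi_+)$ then $\sigma v\prec g^*(\xi_+)$. This is not automatic for arbitrary sequences, but it holds here because $w=g(x)$ is a \emph{greedy} sequence, so by Proposition \ref{iff} (1), whenever $w_n<m$ we already know $\pi(\sigma^{n-1}w)<a_{w_n+1}\le\xi_+$, giving $G^{n-1}x<\xi_+$ at every such position, and then Proposition \ref{supincrease} (2) applies at each such $n$ directly — in other words, I should not propagate at all but instead observe: for $n\ge p$, either $w_{n}=m$ (and I handle the all-$m$ tail via Lemma \ref{iff0}/the hypothesis $w\neq m^\infty$ to find the next drop), or $w_n<m$ in which case $G^{n-1}x<a_{w_n+1}\le\xi_+$ forces $G^nx$ appropriately, and then use that $\sigma^n w=g(G^nx)$ together with a case check on whether $G^nx<\xi_+$. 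I would structure the final write-up around the dichotomy "$G^nx<\xi_+$" versus "$G^nx\ge\xi_+$ but $w_{n+1}=m$", pushing the index forward to the next position where $w<m$, which exists and is $\ge p$ since $w\neq m^\infty$; at that position strict inequality $\sigma^{n}w\prec g^*(\xi_+)$ comes from comparing first coordinates and invoking Proposition \ref{supincrease} (2) as above. Part (2) is identical with $\xi_+\rightsquigarrow\eta_-$, $a\rightsquigarrow b$, greedy $\rightsquigarrow$ lazy, $\prec\rightsquigarrow\succ$, and Proposition \ref{iff} (3), Proposition \ref{supincrease} (2) ($l^*(x)\prec l(y)$) in place of their greedy analogues.
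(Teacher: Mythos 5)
There are two genuine gaps, both in part (1). First, the inequality you lean on twice --- that $a_{\ell+1}\le\xi_+$ for every $\ell\in\{0,\dots,m-1\}$, used to get the existence of $p$ from the first index $j$ with $w_j<m$ and again in the later step ``$G^{n-1}x<a_{w_n+1}\le\xi_+$'' --- is false. Take $m=2$ and $\beta_0=\beta_1=\beta_2=\tfrac32$ (which lies in $D_2$): then $T_k(a_{k+1})=1$ for each $k$, so $\xi_+=1$, while $a_2=\tfrac43>\xi_+$. The correct move is to apply $T_{w_j}$ once more: whenever $w_j<m$ one has $G^{j-1}x\in[a_{w_j},a_{w_j+1})$, hence $G^jx=T_{w_j}(G^{j-1}x)<T_{w_j}(a_{w_j+1})\le\xi_+$, which gives $p\le j<\infty$. (The paper proves existence differently, by showing that if $G^ix\ge\xi_+$ for all $i$ then each greedy step decreases the point by at least a fixed $c>0$, forcing $G^nx\to-\infty$; your one-step route is fine once repaired as above.)

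Second, and more seriously, the core of part (1) is missing. You correctly sense that you need $\sigma^nw\prec g^*(\xi_+)$ for \emph{all} $n\ge p$, not just $n=p$, but none of your three substitutes works: (i) ``$v\prec u$ and $u$ dominates its shifts imply $\sigma v\prec u$'' is false for sequences in general, as you yourself note; (ii) the dichotomy ``$G^nx<\xi_+$ or $w_{n+1}=m$'' is not exhaustive a priori, since $\xi_+$ can be smaller than $a_m$ (in the example above $\xi_+=1<\tfrac43=a_m$), so a point with $\xi_+\le G^nx<a_m$ would have $w_{n+1}<m$; and (iii) even in the branch $w_{n+1}=m$, comparing first coordinates with $g^*(\xi_+)$ cannot give $\sigma^nw\prec g^*(\xi_+)$ --- in the example $g^*(\xi_+)$ begins with the digit $1<m$, so a tail beginning with $m$ would be lexicographically \emph{larger}; the lemma can only be true because that branch never occurs. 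The missing idea, which is exactly what the paper supplies, is that $[0,\xi_+)$ is forward invariant under $G$: if $y<\xi_+\le\frac{m}{\beta_m-1}$ and $y\ge a_m$ then $Gy=\beta_my-m<y<\xi_+$, while if $y\in[a_k,a_{k+1})$ with $k\le m-1$ then $Gy=T_ky<T_k(a_{k+1})\le\xi_+$. Hence $G^nx<\xi_+$ for every $n\ge p$, and then $\sigma^nw=g(G^nx)\prec g^*(\xi_+)$ follows uniformly from Proposition \ref{commutative}(1) and Proposition \ref{supincrease}, with no case analysis on the digits. Without this invariance your argument does not close; the symmetric reduction of part (2) is fine once part (1) is fixed.
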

\begin{proof} (1) By $(\beta_0,\cdots,\beta_m)\in D_m$, we get
$$a_k<a_{k+1}\le b_k$$
for all $k\in\{0,\cdots,m-1\}$. This implies $0<\xi_+\le\frac{m}{\beta_m-1}$.
\begin{itemize}
\item[\textcircled{\footnotesize{1}}] Prove that there exists $i\ge0$ such that $G^ix<\xi_+$.
\newline(By contradiction) Assume $G^ix\ge\xi_+$ for all $i\ge0$. Let $r$ be the greatest integer in $\{0,\cdots,m\}$ such that $a_r\le\xi_+$ and
$$c=c(x):=\left\{\begin{array}{ll}
x-\beta_mx+m& \text{if }r=m;\\
\min\{x-\beta_mx+m,a_{r+1}-\xi_+\}& \text{if } r\le m-1.
\end{array}\right.$$
It follows from $w\neq m^\infty$ (which implies $x<\frac{m}{\beta_m-1}$ by Lemma \ref{iff0}) and the definition of $r$ that $c>0$.
\begin{itemize}
\item[i)] Prove that for all $y\in[\xi_+,x]$, we have $y-Gy\ge c$.
\newline In fact, if $y\ge a_m$, then $y-Gy=y-\beta_my+m\ge x-\beta_mx+m\ge c$. We only need to consider $\xi_+\le y<a_m$ in the following. By $\xi_+<a_m$, we know $r\le m-1$ and
$$[\xi_+,a_m)\subset[a_r,a_{r+1})\cup[a_{r+1},a_{r+2})\cup\cdots\cup[a_{m-1},a_m).$$
There exists $k\in\{r,r+1,\cdots,m-1\}$ such that $y\in[a_k,a_{k+1})$. Thus
$$y-Gy=y-(\beta_ky-k)=(1-\beta_k)y+k>(1-\beta_k)a_{k+1}+k=a_{k+1}-T_k(a_{k+1})\ge a_{r+1}-\xi_+\ge c.$$
\item[ii)] Deduce a contradiction.
\newline Recall that we have assumed $G^ix\ge\xi_+$ for all $i\ge0$. First by $x\ge\xi_+$ and i), we get $x-Gx\ge c$. Then by $\xi_+\le Gx\le x$ and i) again, we get $Gx-G^2x\ge c$. $\cdots$
\newline For all $n\ge1$, we can get $G^{n-1}x-G^nx\ge c$. It follows from the summation of the above inequalities that $x-G^nx\ge nc$, where $nc\to+\infty$ as $n\to+\infty$. This contradicts $G^ix\ge \xi_+$ for all $i\ge0$.
\end{itemize}
\item[\textcircled{\footnotesize{2}}] For all $n\ge p$, $\sigma^nw\prec g^*(\xi_+)$ follows from
$$\sigma^nw=\sigma^n(g(x))\overset{(\star)}{=}g(G^nx)\overset{(\star\star)}{\prec}g^*(\xi_+),$$
where $(\star)$ follows from Proposition \ref{commutative} (1), and $(\star\star)$ follows from Proposition \ref{supincrease} and $G^nx<\xi_+$, which can be proved as follows. First we have $G^px<\xi_+$ by the definition of $p$. It suffices to prove that for all $y\in[0,\xi_+)$, we have $Gy<\xi_+$. In fact, let $y\in[0,\xi_+)\subset[0,\frac{m}{\beta_m-1})$. If $y\ge a_m$, then
$$Gy=T_my=\beta_my-m<y<\xi_+.$$
If $y<a_m$, then there exists $k\in\{0,\cdots,m-1\}$ such that $y\in[a_k,a_{k+1})$ and we have
$$Gy=T_ky<T_k(a_{k+1})\le\xi_+.$$
\end{itemize}
(2) follows in a way similar to (1) by using $a_k\le b_{k-1}<b_k$ instead of $a_k<a_{k+1}\le b_k$ for all $k\in\{1,\cdots,m\}$.
\end{proof}

\begin{proof}[Proof of Theorem \ref{main}] (1) \textcircled{\footnotesize{1}} Suppose that $w$ is the greedy $(\beta_0,\cdots,\beta_m)$-expansion of $x$ and $w_n<m$. Then $G^{n-1}x\in[a_{w_n},a_{w_n+1})$ and
$$G^nx=G(G^{n-1}x)=T_{w_n}(G^{n-1}x)<T_{w_n}(a_{w_n+1})\le\xi_+.$$
It follows from Lemma \ref{lemma} (1) that $\sigma^nw\prec g^*(\xi_+)$.
\newline\textcircled{\footnotesize{2}} Suppose $w_n<m$. By Proposition \ref{iff} (1), we only need to prove $\pi(w_nw_{n+1}\cdots)<a_{w_n+1}$, which is equivalent to $\pi(w_{n+1}w_{n+2}\cdots)<T_{w_n}(a_{w_n+1})$.

For simplification, we use $g^*_i$ to denote $g^*_i(\xi_-)$ for all $i\in\N$ in the following.

First by condition $\sigma^nw\prec g^*(\xi_-)$, we get $w_{n+1}w_{n+2}\cdots\prec g^*_1g^*_2\cdots$. Then there exist $s_1\in\N$ and $n_1=n+s_1$ such that
$$w_{n+1}\cdots w_{n_1-1}=g^*_1\cdots g^*_{s_1-1}\quad\text{and}\quad w_{n_1}<g^*_{s_1}.$$
By condition $\sigma^{n_1}w\prec g^*(\xi_-)$, we get $w_{n_1+1}w_{n_1+2}\cdots\prec g^*_1g^*_2\cdots$. Then there exist $s_2\in\N$ and $n_2=n_1+s_2$ such that
$$w_{n_1+1}\cdots w_{n_2-1}=g^*_1\cdots g^*_{s_2-1}\quad\text{and}\quad w_{n_2}<g^*_{s_2}.$$
For general $j\ge2$, if there already exist $s_j\in\N$ and $n_j=n_{j-1}+s_j$ such that
$$w_{n_{j-1}+1}\cdots w_{n_j-1}=g^*_1\cdots g^*_{s_j-1}\quad\text{and}\quad w_{n_j}<g^*_{s_j},$$
by condition $\sigma^{n_j}w\prec g^*(\xi_-)$ we get $w_{n_j+1}w_{n_j+2}\cdots\prec g^*_1g^*_2\cdots$. Then there exist $s_{j+1}\in\N$ and $n_{j+1}=n_j+s_{j+1}$ such that
$$w_{n_j+1}\cdots w_{n_{j+1}-1}=g^*_1\cdots g^*_{s_{j+1}-1}\quad\text{and}\quad w_{n_{j+1}}<g^*_{s_{j+1}}.$$
For all $i\ge1$, $s_i$ and $n_i$ are well defined by the above process. Since
$$\pi(w_{n+1}w_{n+2}\cdots)=\sum_{i=0}^\infty\frac{\pi(w_{n_i+1}\cdots w_{n_{i+1}})}{\beta_{w_{n+1}}\beta_{w_{n+2}}\cdots\beta_{w_{n_i}}}$$
and
$$T_{w_n}(a_{w_n+1})=\sum_{i=0}^\infty\Big(\frac{T_{w_{n_i}}(a_{w_{n_i}+1})}{\beta_{w_{n+1}}\beta_{w_{n+2}}\cdots\beta_{w_{n_i}}}-\frac{T_{w_{n_{i+1}}}(a_{w_{n_{i+1}}+1})}{\beta_{w_{n+1}}\beta_{w_{n+2}}\cdots\beta_{w_{n_{i+1}}}}\Big)$$
where $n_0:=n$ and $\beta_{w_{n+1}}\beta_{w_{n+2}}\cdots\beta_{w_{n_0}}:=1$, we only need to prove
$$\pi(w_{n_i+1}\cdots w_{n_{i+1}})<T_{w_{n_i}}(a_{w_{n_i}+1})-\frac{T_{w_{n_{i+1}}}(a_{w_{n_{i+1}}+1})}{\beta_{w_{n_i+1}}\beta_{w_{n_i+2}}\cdots\beta_{w_{n_{i+1}}}},$$
$$\text{i.e.,}\quad\pi(w_{n_i+1}\cdots w_{n_{i+1}-1})+\frac{a_{w_{n_{i+1}}}}{\beta_{w_{n_i+1}}\beta_{w_{n_i+2}}\cdots\beta_{w_{n_{i+1}-1}}}<T_{w_{n_i}}(a_{w_{n_i}+1})-\frac{a_{w_{n_{i+1}}+1}-a_{w_{n_{i+1}}}}{\beta_{w_{n_i+1}}\beta_{w_{n_i+2}}\cdots\beta_{w_{n_{i+1}-1}}},$$
$$\text{i.e.,}\quad\pi(w_{n_i+1}\cdots w_{n_{i+1}-1})+\frac{a_{w_{n_{i+1}}+1}}{\beta_{w_{n_i+1}}\beta_{w_{n_i+2}}\cdots\beta_{w_{n_{i+1}-1}}}<T_{w_{n_i}}(a_{w_{n_i}+1})\quad\text{for all }i\ge0.$$
In fact, for all $i\ge0$, by $w_{n_i+1}\cdots w_{n_{i+1}-1}=g^*_1\cdots g^*_{s_{i+1}-1}$ and $w_{n_{i+1}}+1\le g^*_{s_{i+1}}$ (which implies $a_{w_{n_{i+1}}+1}\le a_{g^*_{s_{i+1}}}$), we get
$$\begin{aligned}
\pi(w_{n_i+1}\cdots w_{n_{i+1}-1})+\frac{a_{w_{n_{i+1}}+1}}{\beta_{w_{n_i+1}}\beta_{w_{n_i+2}}\cdots\beta_{w_{n_{i+1}-1}}}&\le\pi(g^*_1\cdots g^*_{s_{i+1}-1})+\frac{a_{g^*_{s_{i+1}}}}{\beta_{g^*_1}\beta_{g^*_2}\cdots\beta_{g^*_{s_{i+1}-1}}}\\
&=\pi(g^*_1\cdots g^*_{s_{i+1}})\\
&\overset{(\star)}{<}\pi(g^*(\xi_-))=\xi_-\le T_{w_{n_i}}(a_{w_{n_i}+1}),
\end{aligned}$$
where $(\star)$ follows from the fact that $g^*(\xi_-)$ does not end with $0^\infty$ (by Proposition \ref{iff} (2)).
\newline(2) follows in a way similar to (1).
\newline(3) follows immediately from (1), (2) and Proposition \ref{max-min} (1).
\end{proof}

Corollary \ref{cor1} follows directly from Theorem \ref{main}.

Corollary \ref{cor2} follows from Theorem \ref{main}, the facts that $\beta_0\le\beta_1\le\cdots\le\beta_m$ implies $\xi_+\le1$ and $\eta_-\ge\frac{m}{\beta_m-1}-1$, $\beta_0\ge\beta_1\ge\cdots\ge\beta_m$ implies $\xi_-\ge1$ and $\eta_+\le\frac{m}{\beta_m-1}-1$, and the increase of $g^*$ and $l^*$ (by Proposition \ref{commutative} (4)).

\begin{proof}[Proof of Corollary \ref{cor3}] (1) follows immediately from Corollary \ref{cor2} and Proposition \ref{symmetric}.
\newline(2) \textcircled{\footnotesize{1}} $\boxed{\Rightarrow}$ follows from Lemma \ref{lemma} (1), in which $\xi_+=1$ and $p=0$.
\newline$\boxed{\Leftarrow}$ First by (1) \textcircled{\footnotesize{1}}, we know that $w$ is the greedy expansion $g(x)$. Then it follows from $g(x)=w<g^*(1)\le g(1)$ and the strictly increase of $g$ (by Proposition \ref{commutative} (4)) that $x<1$.
\newline\textcircled{\footnotesize{2}} $\boxed{\Rightarrow}$ follows from Proposition \ref{symmetric} and Lemma \ref{lemma} (2), in which $\eta_-=\frac{m}{\beta-1}-1$ and $q=0$.
\newline$\boxed{\Leftarrow}$ First by (1) \textcircled{\footnotesize{2}}, we know that $w$ is the lazy expansion $l(x)$. Then it follows from $l(x)=w>\overline{g^*(1)}=l^*(\frac{m}{\beta-1}-1)\ge l(\frac{m}{\beta-1}-1)$ and the strictly increase of $l$ (by Proposition \ref{commutative} (4)) that $x>\frac{m}{\beta-1}-1$.
\newline\textcircled{\footnotesize{3}} follows from \textcircled{\footnotesize{1}}, \textcircled{\footnotesize{2}} and Proposition \ref{max-min} (1).
\end{proof}

\section{Further questions}

On the one hand, although necessary and sufficient conditions for sequences to be greedy, lazy and unique expansions in two bases and one base are obtained in Corollary \ref{cor1} and \ref{cor3} respectively, for general cases, i.e., in more than two bases, Theorem \ref{main} and Corollary \ref{cor2} can only give necessary conditions and sufficient conditions separately. We look forward to getting necessary and sufficient conditions for general cases.

On the other hand, in our main results, including Theorem \ref{main}, Corollary \ref{cor1}, \ref{cor2} and \ref{cor3}, we can see that some special expansions of $\xi_+,\xi_-,\eta_+,\eta_-,1$ and $\frac{m}{\beta_m-1}-1$ play important roles in determining greedy, lazy and unique expansions of general $x$. Thus we think that it is meaningful to characterize the greedy, quasi-greedy, lazy, quasi-lazy and unique expansions of $\xi_+,\xi_-,\eta_+,\eta_-,1$ and $\frac{m}{\beta_m-1}-1$ in multiple bases, especially in combinatorial ways. See \cite{EJK90} for combinatorial characterizations of greedy, lazy and unique expansions of $1$ in one base.

\begin{ack}
The author thanks Professor Jean-Paul Allouche for his advices and pointing to the paper \cite{N19}. The author is also grateful to the Oversea Study Program of Guangzhou Elite Project (GEP) for financial support (JY201815).
\end{ack}

\end{document}